\def\be{\begin{equation}}
	\def\en{\end{equation}}
\definecolor{darkgreen}{rgb}{.1,.6,0}
\newcommand{\Z}{\mathbb{Z}}
\newtheorem{theorem}{Theorem}[section] % 1st argument is your name for it
\newtheorem{lemma}[theorem]{Lemma}    % 2nd argument is what is printed
\newtheorem{corollary}[theorem]{Corollary}
\newtheorem{proposition}[theorem]{Proposition}
\theoremstyle{definition}
\newtheorem{definition}[theorem]{Definition}
\newtheorem{example}[theorem]{Example}
\newtheorem*{ack*}{Acknowledgment}
\theoremstyle{remark}
\newtheorem{remark}[theorem]{Remark}
\newtheorem{obs}[theorem]{Observation}
\numberwithin{equation}{section}
\title{A classification of nonexpansive Bratteli-Vershik systems}
\author{Karl Petersen}
\address{Department of Mathematics,
	CB 3250 Phillips Hall,
	University of North Carolina,
	Chapel Hill, NC 27599 USA}
\email{petersen@math.unc.edu}
\author{Sandi Shields}
\address{College of Charleston, 66 George St., Charleston, SC 29424-0001 USA}
\email{shieldss@cofc.edu}
\date{\today}
	\subjclass[2020]{37B10, 37B02, 28D05}
	\keywords{Bratteli-Vershik system, expansiveness, odometer}
\begin{document}
\begin{abstract}
We study {simple}, properly ordered nonexpansive Bratteli-Vershik ($BV$) systems. 
Correcting a mistake in an earlier paper, we redefine the {classes} standard nonexpansive ($SNE$) and strong standard nonexpansive ($SSNE$).
{We define also the {classes of very well timed and well timed systems}, their opposing classes of untimed and very untimed systems (which feature, as subclasses of ``Case (2)", in the work of Downarowicz and Maass as well as Hoynes on expansiveness of $BV$ systems of finite topological rank), 
and several related classes according to the existence of indistinguishable pairs (of some ``depth") and their synchronization (``common cuts").  
We establish some properties of these types of systems and some relations among them. 
We provide several relevant examples, including a problematic one that is conjugate to a well timed system while also (vacuously) in the classes ``Case (2)".}
We prove that the class of all simple, properly ordered nonexpansive $BV$ systems is the disjoint union of the ones conjugate to well timed systems and those conjugate to untimed systems, 
thereby showing that 
nonexpansiveness in $BV$ systems arises in one of two mutually exclusive ways. 

\end{abstract}
\maketitle

	\section{Introduction} 
Bratteli-Vershik ($BV$) systems present visually and combinatorially the hierarchical mechanisms that drive measure-preserving and topological dynamical systems. 
Vershik \cites{Vershik1981Uniform,Vershik1981Markov} and Herman-Putnam-Skau \cite{HPS1992} showed that every measure-preserving system and every minimal homeomorphism on the Cantor set is isomorphic (in the appropriate sense) to a $BV$ system.
As with any system, it is useful, when possible, to code the dynamics as a subshift, so that the methods of symbolic dynamics and formal languages can be brought into action. 
This is possible exactly when the system is expansive (in the topological setting), or essentially expansive (in the measure-preserving setting; see \cite{AFP}). 
There has been considerable progress on the question of when $BV$ systems are expansive or not, or when a sequence of morphisms is recognizable: \cites{dm2008,BezuglyiKwiatkowskiMedynets2009,FPS2017,AFP,Berthe2017,FPS2020}, for example. 
Here we explore the reasons why a $BV$ system might be nonexpansive.
We focus on $BV$ systems that are simple (so the topological dynamical system is minimal) and properly ordered (there is a unique minimal path and a unique maximal path), but not necessarily of finite topological rank (conjugate to one with a bounded number of vertices per level).

Many nonexpansive systems are conjugate to odometers, which {can be represented by diagrams of}  bounded width, or seem to be similar to the Gjerde-Johansen example \cite{GJ2000}*{Figure 4}, which has unbounded width 
{and is not conjugate to an odometer}.
For brevity we will refer to this system, which motivated much of this investigation, as the {\em GJ example}.
In an earlier paper \cite{FPS2017}, abstracting key properties of this example, {the authors (including us)} proposed a class of systems (standard nonexpansive, $SNE$) as a model for how nonexpansivesness can arise in $BV$ systems.
{Proposition 5.4 of \cite{FPS2017} asserted that every standard nonexpansive system, according to the definition given there, has unbounded width and cannot be conjugate to an odometer.
	Example \ref{ex:OldSNE} below shows that neither of these properties is guaranteed under the old definition of $SNE$;  
	that definition was flawed, {being based on the assumption that} two paths with the same sequence of ordinal edge labels had to move simultaneously to new vertices.
	We cannot even guarantee that $SNE$ systems, as originally defined, are nonexpansive.
	Here we correct those mistakes by providing a new definition of $SNE$ (Definition \ref{def:newSNE}), as well as its invariant version $SSNE$ {(Definition \ref{def:SSNE}).}
	Nonexpansiveness now follows from Proposition \ref{prop:samecodings}, and  Proposition  \ref{prop:noSNEconjOdom} and Theorem \ref{thm:unbddwidth} show that any system satisfying the new definition of $SNE$ is not conjugate to any odometer and has unbounded width.}
\begin{comment}
{It was claimed there} that all such systems had to have unbounded width, but Example \ref{ex:OldSNE} below shows that this is not so;
{the definition there was flawed, assuming that two paths with the same sequence of ordinal edge labels had to move simultaneously to new vertices.}
Here we correct those mistakes by providing a new definition of $SNE$, as well as its invariant version $SSNE$, and a new theorem (Theorem \ref{thm:unbddwidth}) replacing the previous mistaken one. 
{In Observations \ref{obs:GJ}, \ref{obs:aperiodic}, and \ref{obs:recog} and Proposition \ref{prop:GJSNE} we detail properties of the GJ example that allow us to present in Example \ref{ex:modGJ} a modification that is not SNE and is not conjugate to any odometer.}
\end{comment}

{In Section \ref{sec:modGJ} we study the GJ example in detail, to prove that it is standard nonexpansive and to show how a slight modification ruins this property. 
	For the latter, we use the technique of splitting $j$-symbols from \cite{dm2008} and prove that the $2$-coding of every path in the GJ example is aperiodic (Observation \ref{obs:aperiodic}) 
	and the related fact that a certain factor system gives rise to a recognizable family of morphisms (Observation \ref{obs:recog}).}

{We define two types of systems, well timed or untimed, as well as their subclasses very well timed and very untimed. 
{The GJ example is very well timed.}
The untimed systems are related to what Downarowicz and Maass called ``Case (2)" in \cite{dm2008}. 
Theorem \ref{thm:disjunion} shows that the family of all nonexpansive systems is the disjoint union of the class of systems which are conjugate to some well timed system and the class of those conjugate to some untimed system.} 

 {Hoynes \cite{Hoynes2017} used a slightly different ``Case (2)", and both \cite{dm2008} and \cite{Hoynes2017} used telescoping to arrive at stronger properties. 
  Definition \ref{def:classes} states the definitions of these properties and a few of the others that would arise from permuting or negating the quantifiers concerning depth and cuts. 
 Proposition \ref{prop:newlemma} clarifies what happens to depth and cuts under telescoping. 
 Working towards the proof of Theorem \ref{thm:disjunion}, we characterize in Proposition \ref{prop:CW} the systems that are conjugate to some well timed system as those that are ``weakly well timed".  
 We establish relations among these various types of systems. Example \ref{ex:odoms} shows that odometers can be either {untimed but not very untimed, or very untimed}, depending on how they are presented. 
 Example \ref{ex:DM2WW} is a system that is both weakly well timed and (vacuously) satisfies the property in each ``Case (2)"; this indicates that, as written, the proofs of the main theorem of \cites{dm2008,Hoynes2017} are slightly incomplete.}

{In Section \ref{sec:untimed} we describe the possible diagrams for bounded width very untimed systems: they are all ``kite shaped", like the example in Figure \ref{fig:nondet}.
	In the final section we mention several questions suggested by the foregoing, for example  whether there are any {untimed} systems that have unbounded width and whether every well timed system is conjugate to a system {that is in $SNE$.}
	
	\begin{ack*}
		We thank Sarah Bailey Frick for valuable contributions during the first half of this project.
		\end{ack*}

\section{Some definitions and notation}\label{sec:Defs}

{We deal with Bratteli-Vershik ($BV$) systems $(X,T)$.
{Each system is built on an ordered Bratteli diagram, which is a countably infinite, directed, graded, graph. 
For each $n=0,1,2,\dots$ there is a finite nonempty set of vertices $V_n$.
$V_0$ consists of a single vertex, called the ``root". 
The set of edges is the disjoint union of finite nonempty sets $E_n, n\geq 0$, with $E_n$ denoting the set of edges with source in $V_n$ and target in $V_{n+1}$. 
We assume that every vertex has at least one outgoing edge, and every vertex other than the root has at least one incoming edge. 
There can be multiple edges between pairs of vertices.} 
	{The space $X$ is the set of infinite paths (sequences $x=x_0x_1\dots$, each $x_i$ denoting an edge from level $i$ to level $i+1$) starting at the root {at level $i=0$}}. 
	For a path $x$ we denote by $v_i(x)$ the vertex of the path at level $i$.
	{If $i<j$, we will say that level $j$ is {\em after} or {\em later than} level $i$, which is {\em earlier} or {\em before} level $j$. 
		Since diagrams are often drawn with later levels below earlier ones, we may sometimes say that level $j$ is {\em below} level $i$ if $i<j$ and use {\em up} and {\em down} to refer to relative positions in such diagrams.}
	$X$ is a compact metric space when we specify that two paths have distance $1/2^n$ if they agree from levels $0$ to $n$ and disagree {leaving level} $n$. To avoid degenerate situations we assume that $X$ is homeomorphic to the Cantor set.}
	
	{The edges entering each vertex are totally ordered, and this yields a partial order on the set of infinite paths as follows. 
	Two paths $x$ and $y$ are {\em comparable} in case they are cofinal: there is a smallest $N>0$ such that $x_n=y_n$ for all $n \geq N$. 
In this case $v_{N}(x)=v_{N}(y)$, and $x_{N-1} \neq y_{N-1}$; we agree that $x<y$ if $x_{N-1} < y_{N-1}$, and $x>y$ if not.
The set of {\em minimal paths}, meaning those all of whose edges are minimal into all of their vertices, will be denoted by $X_{\min}$, and similarly the set of {\em maximal paths} will be denoted by $X_{\max}$. 
The Vershik map $T$ is defined from the set of nonmaximal paths to the set of nonminimal paths by mapping each path $x$ to its {\em successor}, the smallest $y>x$.
We assume that the diagrams are {\em properly ordered}, which means that 
there is a unique minimal path and a unique maximal path.
This implies that the Vershik map is {\em perfectly ordered}: it extends to a homeomorphism $T$ on the set $X$ of all infinite paths from the root by mapping the unique maximal path to the unique minimal path. (See \cites{BezuglyiKwiatkowskiYassawi2014,BezuglyiYassawi2016}).} 

The system is called {\em minimal} if every orbit is dense, and it is called {\em simple} if it has a telescoping for which there are complete connections between adjacent levels. 
{It can be proved that a} properly ordered system is minimal if and only if it is simple. 

We will say that two systems are {\em conjugate} if there is an equivariant homeomorphism from one to the other that sends minimal points to minimal points. 
The relation of {\em diagram equivalence} is the one generated by graph isomorphism and telescoping. 
 Herman-Putnam-Skau \cite{HPS1992}*{Section 4} (see also \cite{GPS1995}*{p. 70 and p. 72, Theorem 3.6}) 
 characterized conjugacy of minimal pointed topological dynamical systems defined by properly ordered (called there ``essentially simple") $BV$ diagrams {(with unique minimal points)}, as follows:
 {two systems are conjugate if and only if their diagrams are equivalent, and this happens if and only if there exists a diagram $Z$ that telescopes on odd levels to a telescoping of one of the diagrams and on even levels to a telescoping of the other.} 
 
{We emphasize that in the following, unless stated otherwise, every {\em system} is a $BV$ system defined by a properly ordered, simple diagram, and every {\em conjugacy} is an equivariant homeomorphism between two such systems that sends {the unique minimal point in one system to the unique minimal point in the other} (but we may include reminders about these hypotheses anyway).}
{For convenience we may use the same symbol, such as $X$ or $Y$, to denote a diagram as well as the system that it defines, and we may also use the same symbol, such as $T$, for Vershik maps on different systems, or suppress it entirely.
{By ``pair" we mean a pair of distinct elements, unless stated otherwise.
The {\em width} of a level is the number of vertices at that level.  
Recall that in \cite{dm2008} the {\em topological rank} of a system $(X,T)$ is defined to be the minimum among all $BV$ systems $(Y,T)$ conjugate to $(X,T)$ of the supremum of the widths of the levels of $(Y,T)$.} 
For further terminology and background, see \cites{HPS1992,GPS1995,Durand2010,FPS2017} and the references cited there.}

 {For each $k \geq 1$ denote by $A_k$ the finite alphabet whose elements are the finite paths (segments, strings of edges) from the root to level $k$. 
	For each $a \in A_k$, the set {$[a]=\{x \in X: x_0 \dots x_{k-1}=a\}$} is a clopen {\em cylinder set}, and $\mathcal P_k=\{ [a]:a \in A_k\}$ is a partition of $X$ into clopen sets.  
	The map $\pi _k:X \to A_k^\Z$ is defined by $(\pi_kx)_n=a$ if and only if $T^nx \in [a]$.
	{The doubly infinite sequence $\pi_kx$ is called the {\em $k$-coding of $x$.}}
	Let $\Sigma_k=\pi_kX$, and denote by $\sigma$ the shift transformation on $A_k^\Z$. 
	Then $\pi_k:(X,T) \to (\Sigma_k, \sigma)$ is a {\em factor map}: it is continuous, onto, and it commutes with the transformations.} 

{\begin{definition}\label{def:bb}
For each vertex $v$ {at level $k \geq 1$}, denote by $P(v)$ the set of paths from the root to $v$, {define the {\em dimension} of the vertex $v$ to be }$\dim v=|P(v)|$, and concatenate the elements of $P(v)$, in their {lexicographic order} (defined by the edge ordering), as $p_1 \dots p_{\dim v}$ (so that $[p_{j+1}]=T[p_j]$ for $j=1, \dots ,\dim v -1$). We call the string $p_1 \dots p_{\dim v}$ on symbols from the alphabet $A_k$ the {\em $k$-basic block} at $v$ {and denote it by $B_k(v)$}.
{Each path from the root to level $k$ determines, by truncation, for each $i<k$ a unique path from the root to level $i$, so there is {a} natural factor map $A_k \to A_i$ which converts $B_k(v)$ to a string, {which we denote by $B_i(v)$ and call the {\em $i$-basic block at $v$}}, of the same length on the alphabet $A_i$.}
\end{definition}}

\begin{definition} \label{def:codingbyvertices}
	{The {\em coding by vertices at level $j<n$ of a vertex $w$ at level $n$}, denoted by $C_j(w)$, is defined as follows. 
List in their order in the diagram the paths entering $w$ from vertices at level $j$ as $\{p_1,\dots ,p_m\}$ and denote the source of $p_i$ by $v_i, i=1,\dots ,m$.
Then $C_j(w)=v_1 \dots v_m$. 
(For $j=n-1$, this is the ``morphism read on $\mathcal V_{n}$" in \cite{Durand2010}*{p. 342}).}
		\end{definition}

\begin{definition}\label{def:NE}
	 We say that a  $BV$ system is {\em nonexpansive} (abbreviated $NE$) if for every $k \geq 1$ there exists a pair of paths with the same $k$-coding (by finite paths from level $0$ to level $k$).  \end{definition}

 Thus a system is expansive if and only if there is a $k \geq 1$ such that the map $\pi_k: (X,T) \to (\Sigma_k,\sigma)$ is injective (and hence a conjugacy). 

	\begin{definition} We say two paths $x$ and $x'$ are {\em depth $k \geq 0$} if they have the same $k$-coding but not the same $(k+1)$-coding. \end{definition}
	
	If two paths have the same $k$-coding, then they agree from the root to level $k$.
	
	For any pair of (distinct) paths $x$ and $x'$  there exists a $j$ such that $x$ and $x'$ do not have the same $j$-coding. Hence, telescoping can be used to convert a nonexpansive $BV$ system into one with the property that for every $k\geq 1$ there exists a depth $k$ pair of paths.

	\begin{definition}
		We say two paths $x$ and $x'$ have a {\em (common) $j$ cut} if there is an integer $m$ such that the initial segments of $T^mx$ and  $T^mx'$ are minimal into level $j$.   
		Then we say that the pair has a cut {\em {at time} $m$}.
		\end{definition}

{We say that two paths {\em differ at level $j$} if they follow different edges into level $j$.}
Note that if two paths $x$ and $x'$ {\em differ} at level $k+1$ and have a $k+1$ cut, then $m$ can be chosen so that $T^mx$ and  $T^mx'$ are minimal into {\em distinct} vertices at level $k+1$. (For detailed explanation, see top of page 7 in \cite{Hoynes2017}). 

\section{Standard nonexpansive systems}\label{sec:SNEsystems}
\begin{definition}\label{def:suo}
	Let $k\geq 1$ and $n>k$. We say that two vertices $v,w$ at level $n$ are {\em $k$-equivalent}, 
	and write $v\sim_k w$,
	 if they are {\em strongly uniformly ordered with respect to level $k$}, meaning that the set of paths from vertices at level $k$ to $v$ is order isomorphic with the set of paths from vertices at level $k$ to $w$. Equivalently, the $k$-basic blocks at $v$ and $w$ are identical.  
   (In \cite{FPS2017} a {\em uniformly ordered} level $n+1$ was defined to be {a level} all of whose vertices had $n$-basic blocks that were powers of a single block on the alphabet of paths from the root to level $n$.) 
\end{definition}
	
	 Any pair of vertices that is $(k+1)$-equivalent is also $k$-equivalent. (If $v$ and $w$ at level $n>k+1$ have equal basic blocks in terms of the paths from the root to level $k+1$, when these blocks are rewritten in terms of paths to level $k$ the results will still be identical.)
	 
	 \begin{definition}
	{We say that two paths $x$ and $x'$ are {\em $k$-equivalent at level $n$} if we have $v_n(x) \sim_k v_n(x')$ 
	  {\em and moreover} their paths from the root to level $n$ have {\em the same ordinal path label}. 
		 We say  $x$ and $x'$ are {\em $k$-equivalent}, and write $x \sim_k x'$, if they agree from the root to level $k$ and, for all $n>k$, $x$ and $x'$ are $k$-equivalent at level $n$.}
	 \end{definition}
 
\begin{remark}\label{remark:dot} 
		{Recall that for a path $x$ and $n \geq k \geq 1$, the $k$-basic block $B_k(v_n(x))$ lists in their assigned order the truncations to levels from $0$ to $k$ of paths from the root to $v_n(x)$, say as $q_1 \dots q_{\dim (v_n(x))}$ (see Definition \ref{def:bb}).
		Suppose that $i$ is the index in $[1,\dim (v_n(x))]$ for which $q_i$ is the initial segment of $x$ from the root to level $n$. 
		This information can be conveyed by writing out the string $B_k(v_n(x))$ with {a} ``dot" immediately preceding $q_i$. For example, if $x$ follows only minimal edges to level $n$, then $B_k(v_n(x))=.q_1 \dots q_{\dim(v_n(x))}$.}
	{Two paths $x,x'$ are $k$-equivalent at level $n>k$ if $B_k(v_n(x))=B_k(v_n(x'))$ and, informally, these two basic blocks have the ``dot" in the same place}.
 \end{remark}	
 
	 \begin{definition}\label{def:newSNE}
	  	We say that a nonexpansive $BV$ system is {\em standard nonexpansive (SNE)} if for every $k \geq 1$ there is a pair of $k$-equivalent paths.
\end{definition}

\begin{definition}\label{def:SSNE}
	We say that two paths $x$ and $x'$ are {\em $k$-same}, and write $x \approx_k  x'$, if $T^j x \sim_k T^j x'$ for all $j \in \mathbb Z$. 
	We say that a system is {\em  strong standard nonexpansive (SSNE)} if for every $k \geq 1$ there is a pair of distinct $k$-same paths.
\end{definition}

\begin{remark}
	The paper \cite{FPS2017} introduced a {different} definition of standard nonexpansive: For every $k$ there should exist a pair of paths that agree from the root to {a level $n>k$}, have the same sequence of edge labels, and have the same $k$-basic blocks at all levels 
	{after $k$}.  
	{In the new definition (\ref{def:newSNE}) we replace the requirement that the sequences of edge labels be the same with the stronger requirement that
		 the two paths always have the ``dot" in the same place of their equal $k$-basic blocks.}\\
	\indent	
	Proposition 5.4 of \cite{FPS2017} asserted that every standard nonexpansive system, according to the definition given there, has unbounded width. 
	The following example shows that this is not correct. 
	In Theorem \ref{thm:unbddwidth} below we show that the {new}  definition of $SNE$ given here does suffice to guarantee unbounded width.
	\begin{comment}
{Proposition 5.4 of \cite{FPS2017} asserted that every standard nonexpansive system, according to the definition given there, has unbounded width and cannot be conjugate to an odometer.
	 Example \ref{ex:OldSNE} below shows that neither of these properties is guaranteed under the old definition of $SNE$.  
	 We cannot even guarantee that $SNE$ systems, as originally defined, are nonexpansive.
	 Nonexpansiveness now follows from Proposition \ref{prop:samecodings}, and  Proposition  \ref{prop:noSNEconjOdom} and Theorem \ref{thm:unbddwidth} show that any system satisfying the new definition of $SNE$ is not conjugate to any odometer and has unbounded width.}
	\end{comment}

	\begin{example}\label{ex:OldSNE}
	{This example satisfies the old definition of $SNE$, but not the new one. 
		It has bounded width and is conjugate to an odometer {(since after telescoping to the levels with exactly two vertices, all levels are uniformly ordered, see \cite{FPS2017})}; so it is a counterexample to Prop 5.4 of \cite{FPS2017} and the assertion in its  proof that any system conjugate to an odometer cannot have pairs of paths with the same edge labels.}\\
	\indent
{In this example (see Figure \ref{fig:ig}) the dark path and the dotted path are identical from the root to the vertex labeled {$b$ at level $3$}, where they diverge. 
They continue below with the same sequence of ordinal edge labels. 
At level $1$ (not shown) there are two vertices, $u$ and $v$, 
		and at level $2$ there are four vertices, with {codings by vertices of the previous level} $u^2, u^3, uv^2,v^2u$.
		The vertices $u$ and $v$ connect to $a$ and $b$ via edges in the order that leads to identical {codings by vertices}  $u^2uvvu^3$ at $a$ and $u^3vvuu^2$ at $b$.
At all levels $2$ and later, the paths have identical {codings by vertices on the symbols $u$ and $v$}, but the ``dots" for the two paths are in different places after level $4$. 
After level $0$, the diagram repeats levels and edges with period two, so it has bounded width.\\
\indent
To find such pairs of paths for values of $k$ larger than $1$, use the periodicity of the diagram. For example, to deal with $1$ replaced by $3$ (and hence also by $2$), we may form a pair of paths that is identical from the root to the vertex labeled $B$ at level $5$, where it splits into two paths following the same sequence of edge labels as before ($22121212\dots$). 
These paths always enter vertices with the same {codings by level $3$ vertices} $a,b$, but not with the dot in the same place {(after level $6$)}. 
Then for $3$ replaced by $5$ (and hence also $4$), take two paths that follow identical edges from the root to the vertex $D$ at level $7$, where they split and follow the same sequence of edges $22121\dots$. 
These paths always enter vertices with the same {codings} by {level $5$ vertices} $A,B$, and hence also by the vertices at level $4$, but not with the dots in the same place. 
Etc. }
	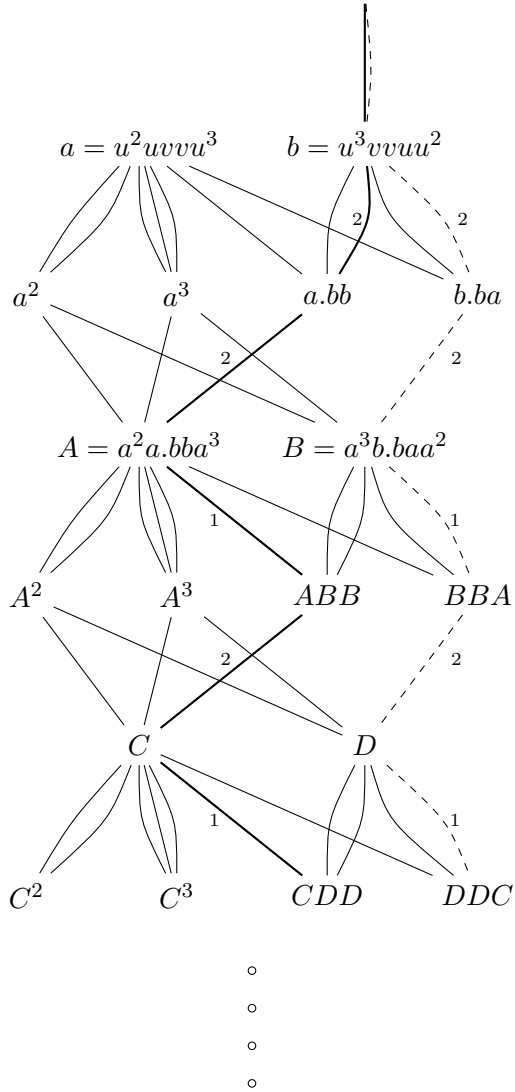
\begin{figure}
	\begin{tikzpicture}[scale=1]
	\node at (4.5,12) (alpha) {};
	\node at (1.5,10) (a) {$a=u^2uvvu^3$}; \node at (4.5,10) (b) {$b=u^3vvuu^2$};
	\node at (0,8) (a^2) {$a^2$}; \node at (2,8) (a^3) {$a^3$}; \node at (4,8) (abb) {$a.bb$}; \node at (6,8) (bba) {$b.ba$};
	\node at (1.5,6) (A) {$A=a^2a.bba^3$}; \node at (4.5,6) (B) {$B=a^3b.baa^2$};
	\node at (0,4) (A^2) {$A^2$}; \node at (2,4) (A^3) {$A^3$}; \node at (4,4) (ABB) {$ABB$}; \node at (6,4) (BBA) {$BBA$};
	\node at (1.5,2) (C) {$C$}; \node at (4.5,2) (D) {$D$};
	\node at (0,0) (C^2) {$C^2$}; \node at (2,0) (C^3) {$C^3$}; \node at (4,0) (CDD) {$CDD$}; \node at (6,0) (DDC) {$DDC$};
	
		\draw [thick] (alpha) -- (b); \draw [dashed] (alpha) .. controls (4.6,11) .. (b);
	%\draw [thick] (alpha) .. controls (4.2,11) .. (b); \draw [dashed] (alpha) .. controls (4.7,11) .. (b);
	\draw [thick] (b) .. controls (4.6,9) .. (abb); \draw [dashed] (b) .. controls (5.6,9) .. (bba);
	\draw [thick] (abb) -- (A); \draw [dashed] (bba)  -- (B);
	\draw [thick] (A) -- (ABB); \draw [dashed] (B) .. controls (5.6,5) .. (BBA);
	\draw [thick] (ABB) -- (C); \draw [dashed] (BBA) -- (D);
	\draw [thick](C) -- (CDD); \draw [dashed] (D) .. controls (5.6,1) .. (DDC);
	
	\draw (a) .. controls (1.1,9) .. (a^2); \draw (a) .. controls (.6,9) .. (a^2);
	\draw (a) -- (a^3); \draw (a) .. controls (1.5,9) .. (a^3);\draw (a) .. controls (2,9) .. (a^3);\draw (a) -- (abb);
	\draw (b) .. controls (4,9) .. (abb); \draw (b) .. controls (4.8,9) .. (bba);
	\draw (a) -- (bba);
	
	\draw (a^2) -- (A); \draw (a^3) -- (A); \draw (a^3) -- (B); \draw (abb) -- (A); 
	\draw (a^2) -- (B);
	
	\draw (A) .. controls (1.1,5) .. (A^2); \draw (A) .. controls (.6,5) .. (A^2);
	\draw (A) -- (A^3); \draw (A) .. controls (1.5,5) .. (A^3);\draw (A) .. controls (2,5) .. (A^3);\draw (A) -- (ABB);
	\draw (B) .. controls (4,5) .. (ABB); \draw (B) .. controls (4.5,5) .. (ABB); \draw (B) .. controls (4.8,5) .. (BBA);
	\draw (A) -- (BBA);
	
	\draw (A^2) -- (C); \draw (A^3) -- (C); \draw (A^3) -- (D); \draw (ABB) -- (C);
	\draw (A^2) -- (D);
	
	\draw (C) .. controls (1.1,1) .. (C^2); \draw (C) .. controls (.6,1) .. (C^2);
	\draw (C) -- (C^3); \draw (C) .. controls (1.5,1) .. (C^3);\draw (C) .. controls (2,1) .. (C^3);\draw (C) -- (CDD);
	\draw (D) .. controls (4,1) .. (CDD); \draw (D) .. controls (4.5,1) .. (CDD);\draw (D) .. controls (4.8,1) .. (DDC);
	\draw (C) -- (DDC);
	
	\node at (3,-1) (x) {};
	\draw  (x) circle  (0.05);
	\node at (3,-1.5) (s) {};
	\draw  (s) circle (0.05);
	\node at (3,-2) (y) {};
	\draw  (y) circle (0.05);
	\node at (3,-2.5) (z) {};
	\draw  (z) circle (0.05);
	%\node at (0,-6.6) (r) {};
	%\draw  (r) circle (0.05);
	
	\node at (4.4,9) {{\tiny $2$}};\node at (5.8,9) {{\tiny $2$}};
	\node at (2.65,7.15) {{\tiny $2$}};\node at (5.72,7.15) {{\tiny$2$}};
	\node at (2.5,5) {{\tiny $1$}};\node at (5.7,5) {{\tiny $1$}};
	\node at (2.65,3.15) {{\tiny $2$}};\node at (5.72,3.15) {{\tiny$2$}};
	\node at (2.5,1) {{\tiny $1$}};\node at (5.7,1) {{\tiny $1$}};
		\end{tikzpicture}
	\caption{Part of a diagram whose system satisfies the old definition of $SNE$ but not the new one}
	\label{fig:ig}
	\end{figure}
			\end{example}
	\end{remark}
	
	{The strict synchronizing structure of $k$-equivalent {pairs of paths} guarantees that they cannot be separated by their $k$-codings.}
	\begin{proposition}\label{prop:samecodings}
		{In any system, if two paths are 
			$k$-equivalent at infinitely many levels, then they have the same $k$-codings.} 
	\end{proposition}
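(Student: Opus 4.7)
The plan is to argue pointwise: for each fixed $j \in \mathbb Z$, exhibit a single sufficiently high $k$-equivalent level $n$ at which the identity $(\pi_k x)_j = (\pi_k x')_j$ can be read off directly from the common tower structure, and then let $j$ vary.

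For the geometric step I would unpack what $k$-equivalence at level $n$ gives. By Remark \ref{remark:dot} the two paths share a common $k$-basic block $B := B_k(v_n(x)) = B_k(v_n(x'))$ of length $L_n = \dim v_n(x) = \dim v_n(x')$, with the ``dot" at a common position $i_n$. Thus, for every integer $j$ with $1 - i_n \le j \le L_n - i_n$, $T^j x$ still passes through $v_n(x)$ as the $(i_n + j)$-th element of the tower over $v_n(x)$, and $T^j x'$ passes through $v_n(x')$ at the analogous position; both therefore project under truncation to level $k$ to the $(i_n + j)$-th symbol of $B$. So $(\pi_k x)_j = (\pi_k x')_j$ for every such $j$ and $n$.

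Next I would show that for each fixed $j$, this range eventually covers $j$ along the infinite set of $k$-equivalent levels. Since the Vershik map changes only a finite initial segment of a non-maximal path, $T^j x$ is cofinal with $x$ whenever the orbit segment from $x$ to $T^j x$ avoids the unique maximal path; hence for all sufficiently large $n$ we have $v_n(T^j x) = v_n(x)$, which is precisely the statement $1 - i_n \le j \le L_n - i_n$, and likewise for $x'$. Combined with the hypothesis that $k$-equivalent levels are unbounded, the previous step applies at some such $n$ and yields $(\pi_k x)_j = (\pi_k x')_j$ for every $j$, whence $\pi_k x = \pi_k x'$.

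The main obstacle is a bookkeeping subtlety: one must rule out the exceptional paths whose forward or backward $T$-orbit hits the unique maximal or minimal path, where the cofinality argument can fail. I would dispose of these by noting that a path in the $T$-orbit of the maximal path sits at a bounded distance from the top of every tower it traverses at sufficiently high levels; if both $x$ and $x'$ were such paths, the common position $i_n$ at infinitely many levels would force $x = x'$, and the mixed case is excluded similarly using that $i_n = i_n'$ and $L_n = L_n'$ along the $k$-equivalent levels. Once these degenerate cases are handled, the conclusion reduces to the clean tower-bookkeeping above.
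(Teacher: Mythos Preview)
Your argument is correct and follows essentially the same route as the paper's proof. Both reduce the claim to showing that, along the infinite set of $k$-equivalent levels, the lengths of the portions of the common basic block to the left and right of the dot tend to infinity, and both dispatch the degenerate case by invoking uniqueness of $x_{\min}$ and $x_{\max}$ to force $x=x'$. The paper phrases this directly in terms of the monotone sequences $i_n-1$ and $L_n-i_n$, whereas you reach the same conclusion via cofinality of $x$ with $T^j x$; one minor imprecision is that the equality $v_n(T^jx)=v_n(x)$ alone is weaker than what you actually use (same tail past level $n$), but since you invoke cofinality explicitly this does not affect the argument.
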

	\begin{proof}
		{Suppose that $x,x'$ is a pair of {distinct} paths for which there is an infinite increasing sequence $(n_j)$ such that 
			for each $j$ the paths $x,x'$ are $k$-equivalent at level $n_j$. 
			Their (identical) $k$-basic blocks at their $n_j$-level vertices $v_{n_j}(x),v_{n_j}(x')$ have lengths increasing to infinity, $j \geq 1$. 
			If the lengths of the segments of the blocks both to the left and right of their dots increase unboundedly, $x$ and $x'$ will have identical $k$-codings. 
			Suppose that the segments to one side of the dot, for example the left, stay bounded. 
			Then there are $m \in [0,\infty)$ and $j_0$ such that the segment to the left has length $m$ at all levels $n_j$ for $j \geq j_0$.
			At these levels $T^{-m}x,T^{-m}x'$ are minimal paths from the root to the vertices $v_{n_j}(x),v_{n_j}(x')$. 
			Since $n_j$ is arbitrarily large, $T^{-m}x$ and $T^{-m}x'$ consist entirely of minimal edges, so both equal the minimal path $x_{\min}$,
			{contradicting our assumption that the paths are distinct.
				Analogously, using uniqueness of the maximal path, 
			the segments of the basic blocks to the right cannot stay bounded.}}
	\end{proof} 

\begin{corollary}\label{cor:sneimpliesne}
	{Standard nonexpansive implies nonexpansive.}
	\end{corollary}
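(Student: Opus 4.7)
The plan is to apply Proposition \ref{prop:samecodings} directly; there is essentially nothing else to do. Fix $k \geq 1$. By Definition \ref{def:newSNE}, choose a pair of distinct paths $x, x'$ with $x \sim_k x'$. Unwinding the definition of $\sim_k$, the paths $x$ and $x'$ agree from the root to level $k$ and are $k$-equivalent at \emph{every} level $n > k$, hence at infinitely many levels. This is exactly the hypothesis of Proposition \ref{prop:samecodings}, which then yields $\pi_k x = \pi_k x'$.

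Since $k \geq 1$ was arbitrary and $x \neq x'$, for every $k$ there is a pair of distinct paths with the same $k$-coding; by Definition \ref{def:NE} the system is nonexpansive, as required.

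There is no genuine obstacle here: all the substantive work has already been packaged into Proposition \ref{prop:samecodings}, and the only verification needed is that a $k$-equivalent pair falls within the hypotheses of that proposition. This verification is immediate from the definition of $\sim_k$, since $k$-equivalence of paths demands $k$-equivalence at every level beyond $k$ (certainly infinitely many). In effect the corollary just announces that the new definition of $SNE$ in Definition \ref{def:newSNE} is robust enough for Proposition \ref{prop:samecodings} to supply the nonexpansiveness that was (problematically) built into the old definition and is now asserted separately.
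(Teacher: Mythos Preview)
Your proof is correct and is exactly the argument the paper intends: the corollary is stated without proof immediately after Proposition~\ref{prop:samecodings}, and the only content is that a $k$-equivalent pair is (by definition) $k$-equivalent at every level $n>k$, so the proposition applies and gives identical $k$-codings for each $k\geq 1$. One small remark on your closing commentary: nonexpansiveness is in fact still written into the hypothesis of Definition~\ref{def:newSNE} (``a nonexpansive $BV$ system is $SNE$ if \dots''); the point of the corollary is that this hypothesis is redundant, not that it has been removed.
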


	\begin{proposition}\label{prop:eqcut}
		If two paths $x$ and $x'$ are $k$-equivalent, then 
		for every $n>k$ they have an $n$ cut, and there is
		a $j \geq k$ such that they are depth $j$. 
		Hence, in any system, {if two $k$-equivalent paths are depth $n \geq k$,} then they {have an $n+1$ cut.}
	\end{proposition}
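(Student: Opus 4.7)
The plan is to extract the time of an $n$-cut directly from the ``dot'' position shared by the two paths in their identical $k$-basic blocks at level $n$. First I would fix $n>k$. Because $x \sim_k x'$, Remark \ref{remark:dot} tells us that $B_k(v_n(x))=B_k(v_n(x'))$ and that the dots marking the initial segments of $x$ and $x'$ from the root to level $n$ occupy a common index $i\in\{1,\dots,\dim v_n(x)\}$. I would then set $m=-(i-1)$ and verify, using the recursion $[p_{j+1}]=T[p_j]$ from Definition \ref{def:bb}, that $T^m x$ and $T^m x'$ each land at index $1$ of the basic block of their respective (and unchanged) level-$n$ vertex. Since the diagram is properly ordered, index $1$ corresponds to the unique minimal path from the root to that vertex, so the initial segments of $T^m x$ and $T^m x'$ to level $n$ are minimal paths; this is exactly an $n$-cut at time $m$.

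For the depth claim, I would first apply Proposition \ref{prop:samecodings}: since $x \sim_k x'$ holds at every level $n>k$ (and hence at infinitely many levels), the two paths have identical $k$-codings. Because $x\neq x'$, the observation recorded just after the definition of depth in Section \ref{sec:Defs} supplies some $j'$ with $\pi_{j'} x\neq \pi_{j'} x'$; choosing the smallest such $j'$ (which must exceed $k$, since $k$-codings agree), the pair has depth $j'-1\geq k$. The final ``Hence'' clause is then immediate: if two $k$-equivalent paths have depth $n\geq k$ in any system, then $n+1>k$, and the first paragraph furnishes the required $(n+1)$-cut.

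The main obstacle I anticipate is the bookkeeping in the first paragraph: I must confirm that applying $T^{-(i-1)}$ keeps each path anchored at its original level-$n$ vertex, so that the ``minimal into level $n$'' conclusion really follows. This reduces to the standard fact that within a single basic block the Vershik map only shifts indices $i\in\{1,\dots,\dim v-1\}$ up by one; the edge case $i=1$ is trivial, since then $m=0$ works directly (the paths are already minimal into level $n$).
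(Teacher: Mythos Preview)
Your argument is correct and follows essentially the same route as the paper: you read off the common cut time from the shared dot position in the identical $k$-basic blocks at level $n$, and you obtain the depth by invoking Proposition~\ref{prop:samecodings} and then taking the first level at which the codings diverge (the paper phrases this dually as the largest $j$ for which the $j$-codings still agree). Your extra bookkeeping paragraph about remaining anchored at the same level-$n$ vertex is a welcome clarification but not a departure in method.
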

	\begin{proof}  			
		For any path and any $n>k$, the position of the dot in the $k$-basic block at level $n$ represents the
		smallest number of applications of $T^{-1}$ applied to that path necessary to produce a path that is minimal into level $n$. Since $x$ and $x'$ are $k$-equivalent, the dot is in the same position for their $k$-basic blocks at  level $n$, so the paths  $x$ and $x'$ have an $n$ cut.  
		
		Given paths $x$ and $x'$ in a system that are $k$-equivalent, {by Proposition \ref{prop:samecodings} they have the same $k$-codings.} There exists  a largest $j \geq k$ such that for all $m \in \Z$, $T^mx$ and $T^mx'$ agree to level $j$. It follows that $x$ and $x'$ are depth $j$. 
	\end{proof}
	
	{Note that the converse {of each statement in Proposition \ref{prop:eqcut}} is not true: it can happen {for a depth $k$ pair and some $n>k$} that the $k$-basic block at $v_{n}(x)$ is a proper prefix of the $k$-basic block at $v_{n}(x')$, and then the paths would not be $k$-equivalent {but could still have an $n$ cut.}

		The following Proposition will be extended in Theorem \ref{thm:CWimpliesNCU}.
		
		\begin{proposition}\label{prop:noSNEconjOdom}
			No $SNE$ system can be conjugate to any odometer.
		\end{proposition}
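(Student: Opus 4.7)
The plan is to use the synchronization supplied by Proposition \ref{prop:eqcut} to produce, inside a hypothetical $SNE$ system $X$, two distinct paths whose orbits simultaneously return to the minimal path arbitrarily closely, and then to use the group structure of an odometer to force the images of these two paths under a conjugacy to coincide.

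First I would invoke the $SNE$ hypothesis to fix some $k \geq 1$ and a pair $x \neq x'$ in $X$ that is $k$-equivalent. Proposition \ref{prop:eqcut} then provides, for each $n > k$, an integer $m_n$ with both $T^{m_n}x$ and $T^{m_n}x'$ minimal into level $n$. Since the diagram is properly ordered, this means that both $T^{m_n}x$ and $T^{m_n}x'$ agree with the unique minimal path $x_{\min}$ from the root to level $n$, so that $T^{m_n}x \to x_{\min}$ and $T^{m_n}x' \to x_{\min}$ in the Bratteli metric as $n \to \infty$.

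Suppose toward a contradiction that $\phi \colon X \to Y$ is a conjugacy with $Y$ an odometer, sending $x_{\min}$ to the unique minimal point of $Y$. By definition an odometer is topologically conjugate to a minimal $+1$ rotation on a compact abelian group of profinite integers, with the minimal point corresponding to the identity $0$; composing $\phi$ with this further conjugacy we may assume $Y$ itself carries this group structure, with $Ty = y+1$ and $y_{\min} = 0$. Setting $y = \phi(x)$ and $y' = \phi(x')$, continuity and equivariance of $\phi$ give $y + m_n = \phi(T^{m_n}x) \to 0$ and likewise $y' + m_n \to 0$. In a compact abelian group this forces $m_n \to -y$ and $m_n \to -y'$ in $Y$, hence $y = y'$, contradicting the injectivity of $\phi$ together with $x \neq x'$.

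The only subtle point is the identification of $Y$ with a profinite $+1$ rotation under which the minimal point becomes the group identity; once this is arranged the argument is essentially a one-line chase combining Proposition \ref{prop:eqcut} with continuity of $\phi$, and I do not expect any genuine technical obstacle beyond pinning down this identification.
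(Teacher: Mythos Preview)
Your argument has a genuine gap at the step where you pass from ``$T^{m_n}x$ and $T^{m_n}x'$ are minimal into level $n$'' to ``$T^{m_n}x$ and $T^{m_n}x'$ agree with $x_{\min}$ from the root to level $n$''. In this paper, a common $n$ cut only means that at some time $m_n$ each of the two paths follows minimal edges from the root into \emph{its own vertex} at level $n$; see the definition of a cut and the remark that follows it, where the paths are explicitly allowed to be minimal into \emph{distinct} vertices. Properly ordered guarantees a unique infinite all-minimal path, but it does not force every finite string of minimal edges to sit on that path: different vertices at level $n$ have different minimal paths from the root. In the proof of Proposition \ref{prop:eqcut} the time $m_n$ is exactly (minus) the dot position inside $B_k(v_n(x))$, so $v_n(T^{m_n}x)=v_n(x)$ and $v_n(T^{m_n}x')=v_n(x')$, and these are typically different (indeed the whole point of a $k$-equivalent pair is that from some level on the two paths sit at distinct $k$-equivalent vertices). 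Hence there is no reason for $T^{m_n}x$ or $T^{m_n}x'$ to converge to $x_{\min}$, and the subtraction trick in the odometer does not get started.

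Moreover, the obvious attempted repairs do not work. In the odometer one has $\phi(T^{m_n}x)-\phi(T^{m_n}x')=\phi(x)-\phi(x')$ for \emph{all} $n$, so this difference is constant and cannot be driven to $0$ by choosing $m_n$ cleverly; what you would really need is $d_X(T^{m_n}x,T^{m_n}x')\to 0$, and $k$-equivalence only gives the uniform bound $1/2^k$, not decay. Varying $k$ gives a new pair for each $k$, so the nonzero differences $g_k=\phi(x^{(k)})-\phi(x'^{(k)})$ may simply tend to $0$ without any contradiction. The paper's proof avoids this issue entirely by arguing combinatorially inside the diagram: it telescopes to obtain infinitely many uniformly ordered levels (via \cite{FPS2017}*{Theorem 5.3}) and shows that at the first uniformly ordered level past the split, a $1$-equivalent pair would have to carry its dot in two different positions of the same basic block, which is impossible.
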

		\begin{proof}
			{Suppose that $(X,T)$ is a system that is conjugate to an odometer. By \cite{FPS2017}*{Theorem 5.3} it has a telescoping {that has infinitely many uniformly ordered levels}. 
				We claim that for $k = 1$ no pair in the telescoped system can be $k$-equivalent, so the 
				{telescoped system} in fact cannot be $SNE$.}
			
			{Suppose that $x,x'$ is a $1$-equivalent pair {in the telescoped system}. Since $x \neq x'$ 
				{but their first edges are equal, $x_0=x'_0$}, the pair first disagrees at some level $j > 1$, that is to say, they follow different edges from level $j-1$ to level $j$.
				If they enter the same vertex at level $j$ along these different edges, they cannot have their dots in the same place. So the paths are at different vertices at level $j$, and then, by induction, they must be at different vertices at each level $n$ for all $n \geq j$.}
			
			Look at the first uniformly ordered level $n > j$. 
			At the vertices $v_{n}(x)$ and $v_{n}(x')$, {because of $1$-equivalence we have $(n-1)$-basic blocks of the same length, and then because of uniform order these $(n-1)$-basic blocks are equal.} 
				Since $x,x'$ are at different vertices at level $n-1$, their dots are at different places in the {$(n-1)$}-basic blocks at $v_{n}(x), v_{n}(x')$. 
				When {these} {$(n-1)$}-basic blocks are expanded to $1$-basic blocks, the dots for $x,x'$ will be at different places, {contradicting $1$-equivalence of $x$ and $x'$.}	
		\end{proof}
		
		{The following Theorem follows directly from the preceding Proposition combined with the main result of \cite{dm2008}, but we give a direct proof here in order to re-establish Proposition 5.4 of \cite{FPS2017} in our context, with the new definition of $SNE$.} 
		
		\begin{theorem}\label{thm:unbddwidth}
			SNE implies unbounded width.
		\end{theorem}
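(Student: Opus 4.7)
The plan is to mimic the proof of Proposition~\ref{prop:noSNEconjOdom}, with the bounded-width hypothesis substituting for the hypothesis of odometer conjugacy. Assume for contradiction that the diagram has width at most $M$ at every level and, invoking SNE at $k = 1$, fix a distinct pair of $1$-equivalent paths $x,x'$.

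The first half of the argument transfers verbatim from Proposition~\ref{prop:noSNEconjOdom}: since $x_0 = x'_0$ but $x \neq x'$, there is a smallest level $j > 1$ at which the two paths diverge, and the dot-alignment clause of $1$-equivalence forces $v_n(x) \neq v_n(x')$ for every $n \geq j$. Indeed, if $x$ and $x'$ ever reached a common vertex at some level $n \geq j$, necessarily via different root-to-$v_n$ truncations, the dot positions in the $1$-basic block at that vertex would differ, contradicting $1$-equivalence at level $n$.

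The decisive step is to produce, in place of the uniformly ordered level exploited in Proposition~\ref{prop:noSNEconjOdom}, a level $n > j$ at which the two $(n-1)$-basic blocks $B_{n-1}(v_n(x))$ and $B_{n-1}(v_n(x'))$ coincide as strings on $A_{n-1}$. Since there are at most $M$ distinct $(n-1)$-basic blocks at each level (one per vertex), I would telescope to a subsequence of levels along which the combinatorial ``type'' of the pair of basic blocks at the tracked vertices $(v_n(x), v_n(x'))$ stabilizes, and then pick out a level at which these two blocks are actually equal. Granted this equality, the endgame of Proposition~\ref{prop:noSNEconjOdom} applies without change: the dots in the two equal $(n-1)$-basic blocks identify distinct root-to-level-$(n-1)$ paths (because $v_{n-1}(x) \neq v_{n-1}(x')$), so they occupy different positions; re-expanding to $1$-basic blocks preserves the misalignment and contradicts $1$-equivalence of $x$ and $x'$ at level $n$.

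The main obstacle is precisely this middle step. A coarse pigeonhole only guarantees that many basic-block repetitions appear somewhere across levels, whereas what is required is equality at the two specific vertices $v_n(x)$ and $v_n(x')$ at a single common level. If the naive pigeonhole is not sharp enough, a fallback is to reason directly about how the dot positions of $x$ and $x'$ evolve from level to level: the pair $(v_n(x), v_n(x'))$ is constrained by at most $M^2$ local transition patterns to $(v_{n+1}(x), v_{n+1}(x'))$, and perpetual dot alignment along distinct vertices ought to be incompatible with the finite repertoire of available transition patterns, yielding the desired contradiction.
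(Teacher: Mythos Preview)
Your proposal has a genuine gap at exactly the point you identify as ``the main obstacle,'' and neither of your suggested fixes closes it. Bounded width by $M$ gives at most $M$ distinct $(n-1)$-basic blocks at level $n$, but there is no mechanism forcing the two \emph{particular} vertices $v_n(x),v_n(x')$ ever to carry the same one; nothing rules out that they differ at every level. Your ``$M^2$ transition patterns'' fallback is also not a proof: the edge structure between consecutive levels (multiplicities and orderings) is not bounded in terms of $M$, so there is no finite repertoire of transitions to pigeonhole on, and in any case a recurring pair $(v_n(x),v_n(x'))$ of vertex names says nothing about equality of their $(n-1)$-basic blocks, since the diagram is not stationary. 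The deeper issue is that you invoke SNE only at $k=1$; a single $1$-equivalent pair produces one nontrivial $\sim_1$-relation among vertices, and that alone does not manufacture width.

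The paper's argument uses the full strength of SNE, calling it at an increasing sequence of levels $j_0<j_1<\dots<j_K$. For each $k$ one takes a $j_k$-equivalent pair $x^{(k)},y^{(k)}$ and shows, via Proposition~\ref{prop:samecodings}, that at all sufficiently deep levels the pair passes through vertices that are $j_k$-equivalent but \emph{not} $j_{k+1}$-equivalent (otherwise the pair would be $j_{k+1}$-equivalent at infinitely many levels and hence have the same $j_{k+1}$-coding, contradicting its depth). Because $\not\sim_{j_k}$ implies $\not\sim_{j_{k+1}}$, the $K$ resulting vertex pairs at any deep enough level are pairwise distinct, so the level has more than $\sqrt{K}$ vertices. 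The missing idea in your attempt is precisely this laddering over many values of $k$: each new $j_k$-equivalent pair contributes a new distinction among vertices, and it is the accumulation of these distinctions, not a single pair, that forces the width to blow up.
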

		\begin{proof}
			{Starting with $j_0=1$, {using Proposition \ref{prop:eqcut}}, we can find two paths $x^{(0)},y^{(0)}$ that are $j_0$-equivalent and follow different edges into some level $j_1 > j_0$.
				As above, because they are $j_0$-equivalent, they must pass through different vertices at level $j_1$, and hence they pass through different $j_0$-equivalent vertices for all $j \geq j_1$.}

			{If they pass through $j_1$-equivalent vertices at infinitely many levels, 
				they must have the dot in the same place in their basic blocks at those levels, because when the $j_1$-basic blocks are expanded to $j_0$-basic blocks, they are the same and have the dot in the same place, so the dot had to be in the same place for the $j_1$-basic blocks.
				Then $x^{(0)}$ and $y^{(0)}$ would be $j_1$-equivalent at these infinitely many levels, and so by Proposition \ref{prop:samecodings} would have the same $j_1$-coding, but they do not. 
				Thus for all large enough $n$, $x^{(0)}$ and $y^{(0)}$ pass through vertices that are $j_0$-equivalent and not $j_1$-equivalent. }
						
			Continuing, given $K \geq 1$, for each $k=0,\dots,K-1$ we can find $N$, integers $j_0 < j_1 < \dots < j_{K}$, and pairs $x^{(k)},y^{(k)}$ that for each $n \geq N$ and $k=0, \dots, K-1$ pass through vertices that are $j_k$-equivalent and not $j_{k+1}$-equivalent:
				\be
				v_n(x^{(k)})\sim_{j_k} v_n(y^{(k)}) \quad\text{ while }\quad v_n(x^{(k)})\nsim_{j_{k+1}} v_n(y^{(k)}).
				\en
						{But these pairs of vertices at any level $n \geq N$ are all different, because not $j_k$-equivalent implies not $j_{k+1}$-equivalent. If there are at most $R$ vertices at every level and $K> R^2$, this is a contradiction.}
		\end{proof}

\section{Modifing the GJ example to ruin the property $SNE$}\label{sec:modGJ}
		{We will show that the
example of Gjerde and Johansen \cite{GJ2000}*{Figure 4}, which has unbounded width and is not conjugate to any odometer, is $SNE$ (even $SSNE$) by specifying for any $k$ exactly which pairs of paths are $k$-equivalent. We will also determine for any $k$ which pairs are depth $k$.}

{Denote the system in the GJ example  by $X$.
Given $j>0$ there are $2j$ vertices in the diagram at level $j$. 
For any $i \leq 2j$, let $v(j,i)$ denote the $i$'th vertex from the left (beginning with $i=1$) in level $j$.
For $i=1,2,\dots$ and $j>i$, paths that at level $j$ pass only through vertices $v(j,2i)$ or $v(j,2i+1)$ will be said to constitute the {\em $i$'th Morse component $MC(i)$} of the diagram. In particular, $MC(i)$ begins at level $i+1$.}

\begin{obs}\label{obs:GJ}
{(1) For any $i \geq 1$, at any level of $MC(i)$ the $i$-basic blocks are the same (at both vertices), as can be verified by writing them out. It can also be verified that the $(i +1)$-basic blocks at the vertices of $MC(i)$ differ after level $i+1$.}

{(2) In the diagram for $X$, each vertex is connected to every vertex at the previous level by exactly one edge, {so all vertices at any level have the same dimension}. From this it can be argued inductively that for every $j \geq 1$ two paths have the same ordinal path label {from the root} into level $j$ precisely when they have the same sequence of edge labels into level $j$.}
\end{obs}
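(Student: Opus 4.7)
The plan is to exploit the recursive Morse-like structure of the Gjerde--Johansen diagram. For part (1), I would argue by induction on $j \ge i+1$ that the two vertices $v(j,2i)$ and $v(j,2i+1)$ of $MC(i)$ have identical $i$-basic blocks. At the base level $j=i+1$, inspection of the GJ construction shows that $v(i+1,2i)$ and $v(i+1,2i+1)$ are joined to each vertex of level $i$ by a single edge with the same relative ordering, so both $i$-basic blocks list in a common order the truncations of single-edge paths from each level-$i$ vertex; these two lists are identical. For the inductive step $j \to j+1$, the incidence patterns feeding $v(j+1,2i)$ and $v(j+1,2i+1)$ are the same except that the roles of the two Morse-component vertices at level $j$ are swapped. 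Because the inductive hypothesis says those two level-$j$ Morse-component vertices already share a common $i$-basic block, the swap is invisible at the level of $i$-basic blocks, so the identity propagates to level $j+1$.

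For the second assertion of (1), I would show that the $(i+1)$-basic blocks at $v(i+2,2i)$ and $v(i+2,2i+1)$ already differ. The two vertices of $MC(i)$ at level $i+1$ are distinct symbols of the level-$(i+1)$ alphabet, and the Morse-type incidence pattern exchanges these two symbols; consequently the two $(i+1)$-basic blocks are Morse-type words in the two-letter alphabet $\{v(i+1,2i),v(i+1,2i+1)\}$ that differ in at least one coordinate. This disagreement is then preserved by all later Morse substitutions within $MC(i)$ and hence persists at every level strictly after $i+1$, as required.

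For part (2), the first step is to verify directly from the GJ construction that between any vertex at level $n$ and any vertex at level $n+1$ there is exactly one edge. An induction on $n$ then yields that all vertices at each level share a common dimension: if every vertex at level $n$ has dimension $D_n$, then any vertex $w$ at level $n+1$ satisfies $\dim w = \sum_{u \in V_n} \dim u = |V_n|\,D_n$, independent of the choice of $w$. For the equivalence between ordinal path labels and sequences of edge labels, the ``exactly one edge'' property means that the source of an incoming edge is determined by its ordinal position among edges entering the target (once a fixed enumeration convention has been adopted), and vice versa. Consequently the ordinal path label of a finite path from the root and its sequence of edge labels determine one another, and two such paths agree in one if and only if they agree in the other.

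The main obstacle I anticipate is writing down the Morse-type incidence rule at each level precisely enough to make both the invariance in the inductive step of part (1) and the genuine distinction between the two letters $v(i+1,2i),v(i+1,2i+1)$ at level $i+2$ transparent; part (2) is then a straightforward combinatorial consequence of the single-edge incidence property established at its first step.
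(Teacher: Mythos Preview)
The paper does not prove this observation; it simply asserts that (1) ``can be verified by writing them out'' and that (2) ``can be argued inductively.'' Your plan supplies exactly the natural argument the paper leaves implicit, and the key mechanism you identify for (1)---that the codings of $v(j+1,2i)$ and $v(j+1,2i+1)$ by level-$j$ vertices differ only by swapping $v(j,2i)$ and $v(j,2i+1)$, which by induction have equal $i$-basic blocks---is correct and is precisely what the explicit edge-label description in the proof of Proposition~4.2(1) yields.

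One small point on (2): be careful with the phrase ``ordinal path label.'' In the paper this means the single integer giving the position of the finite path in the total order of all paths from the root to its terminal vertex, not a sequence of per-edge data. Your sentence ``the source of an incoming edge is determined by its ordinal position \ldots\ and vice versa'' is true but is not by itself the equivalence claimed. The actual inductive step uses the uniform-dimension fact you already established: if every vertex at level $j-1$ has dimension $D_{j-1}$, then a path with edge label $e_j$ into level $j$ and ordinal path label $p_{j-1}$ at level $j-1$ has ordinal path label $(e_j-1)D_{j-1}+p_{j-1}$ at level $j$, and since $1\le p_{j-1}\le D_{j-1}$ this formula is invertible. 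That is the ``straightforward combinatorial consequence'' you allude to; just make sure your write-up states it this way rather than via the source/label bijection alone.
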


{\begin{proposition}\label{prop:GJSNE}.
		The following statements hold for the GJ example, $X$.\\
		{(1) If two edges at level $n \geq 2$ have the same label, {then they have different targets. If they also have different sources, then they are contained in $MC(j)$ for some $j<n.$}}\\
		(2) A pair of paths $x$, $x'$ with the same sequence of edge labels first differ at {level $j \geq 1$} if and only if they enter a Morse component at level $j$ {and} remain in that Morse component at all levels $j$ and higher,
		{never again meeting the same vertex}.\\
		{(3) If two paths have the same sequence of edge labels {and first differ at level $j$, then for some $k<j$ they enter $MC(k)$ at level $j$} and are $k$-equivalent and not $(k+1)$-equivalent.
			Conversely, if two paths are $k$-equivalent and not $(k+1)$-equivalent, then they have the same sequence of edge labels {and enter $MC(k)$ at the level where they first differ}.} \\
			(4) For any $k \geq 2$, a pair of paths is depth $k$ if and only if those paths are $k$-equivalent but not $(k+1)$-equivalent.  
	{(Equivalently, in view of {(3)},} two paths are depth $k$ if and only they have the same sequence of edge labels and enter $MC(k)$ at the level where they first differ.)
	\end{proposition}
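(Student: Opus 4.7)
I would prove the four parts in order since each depends on the previous, and each exploits a different facet of the GJ construction.

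For (1) I would consult the edge-labelling convention of the GJ diagram. Same-labelled edges must have different targets for the general reason that the incoming edges at every vertex are totally ordered, so two distinct edges with a common target would contradict the ordering. For the second clause (same label, different source, different target) I would trace the Morse-like rule that governs the labels in the GJ diagram; this rule arranges same-labelled edges between distinct sources precisely so as to preserve one of the pairs of ``twin'' vertices $v(j,2i),v(j,2i+1)$ that make up $MC(i)$. Hence any such edge pair must be contained in some $MC(j)$ with $j<n$.

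For (2) I would argue by induction on levels starting at the common level $j-1$ where $x,x'$ still coincide. The edges into level $j$ have a common source and a common label but distinct targets, so (1) places both edges inside some $MC(j')$ with $j'<j$. Because the paths continue to have the same edge labels and $MC(j')$ restricts the available vertices at each subsequent level to a pair, the same argument applied at each later level keeps them inside $MC(j')$, and keeps them at \emph{distinct} members of the pair forever (they cannot merge without violating the edge-label hypothesis). The converse direction is immediate from the defining geometry of a Morse component.

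For (3), in the forward direction (2) produces a $k=j'$ such that $x,x'$ enter $MC(k)$ at level $j$. Observation \ref{obs:GJ}(1) says the two vertices in $MC(k)$ at every level carry identical $k$-basic blocks, and Observation \ref{obs:GJ}(2) equates sameness of edge labels with sameness of ordinal path labels, which positions the dot identically in the two blocks; together these give $k$-equivalence. The second clause of Observation \ref{obs:GJ}(1) on disagreement of $(k+1)$-basic blocks after level $k+1$ then rules out $(k+1)$-equivalence. For the converse, $k$-equivalence supplies identical ordinal path labels, hence (by Observation \ref{obs:GJ}(2)) identical edge labels, and the pair must first differ at some level $j>k$; (2) places them in some $MC(k')$ from that level on. The case $k'\ge k+1$ is excluded since it would yield $(k+1)$-equivalence, while $k'<k$ is excluded because then Observation \ref{obs:GJ}(1) (applied at level $k$) would make the $k$-basic blocks unequal past some level, contradicting $k$-equivalence.

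For (4), the implication $(\Leftarrow)$ uses Proposition \ref{prop:samecodings} to obtain identical $k$-codings, while failure of $(k+1)$-codings to match is read off from the Morse placement supplied by (3): at every time $n$ the initial segment of $T^nx$ to level $k+1$ distinguishes the two branches of $MC(k)$, so the $(k+1)$-codings disagree at infinitely many coordinates. The direction $(\Rightarrow)$ is the main obstacle, because ``depth $k$'' is a coding condition rather than a structural one. Here I would argue that, in the GJ diagram, having the same $k$-coding with $k\ge 2$ forces identical edge labels: given identical $k$-coding, the dot positions in the $k$-basic blocks $B_k(v_n(x))$ and $B_k(v_n(x'))$ must agree at each level (one can read the dot position from the number of preimages $T^{-j}$ it takes for a path to become minimal into level $n$, which is determined by the $k$-coding together with the fact that each vertex at level $n-1$ connects to each vertex at level $n$ by a unique edge). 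Same dot positions and same $k$-coding together with the GJ connectivity give identical ordinal path labels, hence by Observation \ref{obs:GJ}(2) identical edge labels. Then (2) and (3) apply and force $k'=k$ using that $(k+1)$-codings disagree, completing the proof.
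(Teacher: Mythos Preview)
Your plan for (1)--(3) follows the paper's route closely and is essentially sound, with one correctable slip in (2): at the first level $j$ of disagreement the two edges into level $j$ share the \emph{same} source (the common vertex at level $j-1$), so the second clause of (1), which requires \emph{different} sources, does not apply there. The paper starts the induction one step later: since the paths arrive at different vertices at level $j$, the edges \emph{leaving} level $j$ have different sources and the same label, and now (1) places them (and hence their level-$j$ sources) inside some $MC(k')$; the induction then proceeds as you describe.

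The genuine gap is in part (4), direction $(\Rightarrow)$. You assert that the dot position in $B_k(v_n(x))$ can be read from the $k$-coding because ``each vertex at level $n-1$ connects to each vertex at level $n$ by a unique edge.'' That hypothesis only guarantees that all $k$-basic blocks at level $n$ have the same length $d_n$, so that the times at which a path is minimal into level $n$ form an arithmetic progression of gap $d_n$; it does \emph{not} determine which residue class mod $d_n$ that progression occupies. Two paths with identical $k$-coding could in principle cut that common bi-infinite sequence into length-$d_n$ blocks at different offsets, and nothing in your sketch rules this out. The paper closes this gap with a nontrivial recognizability argument developed separately (Observations~\ref{obs:aperiodic} and~\ref{obs:recog} and the discussion in Example~\ref{ex:modGJ}): one first shows that the $2$-coding of every path is aperiodic, and then that from the $2$-coding one can reconstruct, via the distinguished vertex $v(j,3)$ at each level, exactly when the path is minimal into every level $j\ge 3$. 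This yields $2$-equivalence from identical $2$-coding; since $k$-basic blocks factor onto $2$-basic blocks with the same length and dot position, one obtains equal dot positions for the $k$-basic blocks, and then equal $k$-basic blocks because the $k$-coding is shared. Without invoking this recognizability input, the implication ``same $k$-coding $\Rightarrow$ $k$-equivalent'' does not go through.
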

\begin{proof}
{Proof of (1):
Fix $n \geq 2$. All edges with source $v(n,1)$ are labeled 1 and all edges with source $v(n, 2n)$ are labeled $2n$. 
For all $j = 1, ...,n-1$, all edges with source $v(n, 2j)$ are labeled $2j$, with the exception of the single edge between $v(n, 2j)$ and $v(n+1, 2j+1)$, which is labeled $2j+1$.  
Likewise, all edges with source $v(n, 2j+1)$ are labeled $2j+1$, with the exception of the single edge between $v(n, 2j+1)$ and $v(n+1, 2j+1)$, which is labeled $2j$. 
 {If two edges between levels $n$ and $n+1$ have the same label $1$ or $2n$, they must have the same source (and different targets).}
{If two} edges between levels $n$ and $n+1$ have the same label {and different sources}, then for some $j = 1, ...,n-1$ one of these edges {has source $v(n, 2j)$,  the other has source $v(n, 2j +1)$, and the two edges have different targets and lie in $MC(j)$.}}\\

Proof of (2): 
	Two paths with the same sequence of edge labels cannot first differ into level $1$, because the edges out of $v(1,1)$ all have label $1$, while the edges out of $v(1,2)$ all have label $2$; so the paths first differ into level $j\geq 2$, arriving at different vertices at level $j$.
	Leaving level $j$, the two paths traverse edges with the same label and different sources. 
 {By part (1), both of these edges are in the same Morse component and have different targets.
Hence, both paths also traverse different edges with the same label and different sources into level $j+2$.  
 By repeated applications of part (1) we get that the two paths enter a Morse component at level $j$ and remain in that Morse component at all levels $j$ and higher.}

 Conversely, if a pair of paths $x$, $x'$ with the same sequence of edge labels enters a Morse component at level $j$ {and remains in that Morse component at all levels $j$ and higher}, then $x$ and $x'$ first differ at level $j$. 
 	This is because,
 from above, the pair cannot differ before entering a Morse component, so they meet different vertices at a first level $n>j$. 
 But since two edges in a Morse component with the same label have different sources, $n=j$. 
 In fact, $x$ and $x'$ do not ever meet the same vertex in that Morse component, 
 since the edges entering any vertex have distinct labels.

{Proof of (3):
If paths $x$ and $x'$ have the same sequence of edge labels,  
	{then by {(2)} 
	 $x$ and $x'$ enter a Morse component $MC(k)$ at the first level $j > k$ where they differ, and they will meet distinct vertices in $MC(k)$ at all levels $j$ and higher. }
	Hence, by Observation \ref{obs:GJ} (1), they will have the same $k$-basic blocks at levels $j$ and higher, whereas their $(k+1)$-basic blocks at level $j+1$ will differ. 
By Observation \ref{obs:GJ} (2), they will have the same ordinal path label from the root into all levels.} 
{Thus the dot will be in the same place in their identical $k$-basic blocks at all levels $k+1$ and higher: the pair will be $k$-equivalent but not $(k+1)$-equivalent.}

{Conversely, we show that any pair of $k$-equivalent paths $x$, $x'$ that are not $(k+1)$-equivalent enter $MC(k)$ at the first level where they differ and have the same sequence of edge labels.
By the definition of $k$-equivalence, such paths agree to level $k$ and their $k$-basic blocks at levels $k+1$ and higher are the same with the dot in the same place. 
In particular, $x$ and $x'$ have the same ordinal path label from level $k$ into any higher level and they agree to level $k$, so they have the same ordinal path label from the root into any level. 
Hence, by Observation \ref{obs:GJ} (2), they have the same sequence of edge labels. 
Denote by $j$ the first level at which $x$ and $x'$ differ (so that $j>k$).
Then, by {(2)}, for some $i < j$ both paths enter $MC(i)$ at level $j$ and are contained in that component at all higher levels. 
By the preceding paragraph, $x$ and $x'$ are $i$-equivalent but not $(i+1)$-equivalent.
Therefore,  $i=k$.}

{Proof of (4):  
Suppose that
 $k \geq 2$ and $x$, $x'$ is a depth $k$ pair.
 Then $x$ and $x'$ have the same $k$-coding and therefore the same $2$-coding.  
As shown later in Example \ref{ex:modGJ}, this means that $x$, $x'$ are $2$-equivalent, 
in other words, at all levels after level {$2$}, they have the same $2$-basic block with the dot in the same place.
{At any level $n \geq k$, the {$k$-basic block} $B_k(v)$ at any vertex $v$ factors via the $1$-block code mentioned {in Definition \ref{def:bb}} onto $B_2(v)$, and the result has the same length with the dot in the same position.}
It follows that the $k$-basic blocks at  $v_n(x)$  and $v_n(x')$ have equal length and the dot in the same position.
Since $x$ and $x'$ have the same $k$-coding, {then} their $k$-basic blocks at all levels {must} be the same.
Therefore, $x$ and $x'$ are $k$-equivalent. 
Furthermore, $x$ and $x'$ are not $(k+1)$-equivalent, since at some time in their orbits they follow different paths into level $(k+1)$ (by definition of depth $k$), whereas by Proposition \ref{prop:samecodings} $(k+1)$-equivalent paths must have the same $(k+1)$-coding. 
Hence, for all $k \geq 2$, any depth $k$ pair is $k$-equivalent but not $(k+1)$-equivalent.}

{Conversely, for all $k \geq 2$, any pair of paths that is $k$-equivalent has the same $k$-coding by Proposition \ref{prop:samecodings}. 
If that pair is not $(k+1)$-equivalent, then, it is not $i$-equivalent for any $i >k$. 
As just shown, this means it cannot be depth $i$ for any $i > k$. 
Therefore that pair is depth $k$.}
	\end{proof}

\begin{corollary}\label{cor:GJSNE}
	{The GJ example is standard nonexpansive}
	\end{corollary}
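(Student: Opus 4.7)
The plan is to deduce the corollary directly from part (3) of Proposition~\ref{prop:GJSNE}, which provides a clean combinatorial characterization of $k$-equivalent pairs in the GJ example. Given that characterization, for each $k \geq 1$ it suffices to exhibit one pair of paths that share the same sequence of edge labels and first diverge at a level where both edges lie in $MC(k)$. Nonexpansiveness, needed to invoke Definition~\ref{def:newSNE}, will be obtained as a by-product via Proposition~\ref{prop:samecodings}.

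To construct the required pair for a fixed $k \geq 1$, I would proceed as follows. Pick any level $j > k+1$; by part~(1) of Proposition~\ref{prop:GJSNE}, between levels $j$ and $j+1$ there exist two edges with the same label and distinct sources that both lie in $MC(k)$, namely the edges out of $v(j,2k)$ and out of $v(j,2k+1)$ carrying the shared label that is not $2k$ or $2k+1$. Choose any initial finite path from the root into $v(j,2k)$, and take the same initial path except with its last edge replaced by one with the same label terminating at $v(j,2k+1)$; such an alternate edge exists because every pair of consecutive levels in the GJ diagram has complete connections. From $v(j,2k)$ and $v(j,2k+1)$, I would continue the two paths forever inside $MC(k)$ using at each subsequent level a common edge label chosen so that both paths remain in $MC(k)$; this is possible because at every level past $k$ the Morse component $MC(k)$ contains edges from its two vertices to its two vertices at the next level, and part~(1) of Proposition~\ref{prop:GJSNE} shows one can always find a label shared by an edge out of each of the two sources landing on distinct targets inside $MC(k)$.

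The resulting pair $x,x'$ agrees from the root up to level $j$, first differs entering level $j+1$ inside $MC(k)$, stays in $MC(k)$ at all subsequent levels, and has the same sequence of edge labels throughout. By Proposition~\ref{prop:GJSNE}(3), $x$ and $x'$ are $k$-equivalent (and not $(k+1)$-equivalent). Since $k$ was arbitrary, the hypothesis in Definition~\ref{def:newSNE} that there exists a $k$-equivalent pair for every $k \geq 1$ is satisfied. Finally, because a $k$-equivalent pair is $k$-equivalent at every level $n > k$, Proposition~\ref{prop:samecodings} forces its two members to share the same $k$-coding, so $\pi_k$ fails to be injective for every $k$ and the GJ example is nonexpansive. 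Combining these facts, the system falls within the class $SNE$, proving the corollary. The only mildly subtle point is verifying that one can indeed continue both paths indefinitely within $MC(k)$ with a common edge-label sequence, but this is immediate from part~(1) of Proposition~\ref{prop:GJSNE} applied level by level, so no serious obstacle arises.
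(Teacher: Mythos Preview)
Your approach---construct, for each $k$, a pair with identical edge-label sequences that first differs upon entering $MC(k)$, then invoke Proposition~\ref{prop:GJSNE}(3)---is essentially the paper's (which cites part~(4) and writes down the explicit pair following $v(j,1)$ for $j\le k$, then the two sides of $MC(k)$ from level $k+1$ on).

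There is one small slip worth fixing. Not \emph{any} initial path into $v(j,2k)$ works, and ``complete connections'' is not the right justification, since it says nothing about labels. If the last edge of your chosen initial path has source $v(j-1,2k)$ or $v(j-1,2k+1)$, then the edges from that source to $v(j,2k)$ and to $v(j,2k+1)$ carry \emph{different} labels (namely $2k$ and $2k+1$), so no same-label replacement exists. The fix is trivial: route the initial path through $v(j-1,1)$ (all of whose outgoing edges are labeled $1$), or simply take $j=k+1$ as the paper does, so that $MC(k)$ does not yet exist at level $j-1$ and the issue cannot arise.
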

\begin{proof}
	{By Part {(4)} of the Proposition, given $k \geq 2$ two paths with the same sequence of edge labels that enter $MC(k)$ at the level where they first differ are depth $k$. 
		For example, the path $x$ that passes through the vertices $v(1,j)$ for {all} $j,1\leq j \leq k$, then $v(j,2k)$ for all $j>k$ and the path $y$ that passes through the vertices 
		$v(1,j)$ for $1\leq j \leq k$, then $v(j,2k+1)$ for all $j>k$, are depth $k$.}
	\end{proof}
	
	{The following observations are related to Example \ref{ex:modGJ}.
			\begin{obs}\label{obs:aperiodic}
				In the GJ example, the $2$-coding of every path is aperiodic (also sometimes called ``nonperiodic").
						\begin{proof}
								We will show that the forward coding of the minimal path $x_{\min}$ by vertices at level $2$, {$(v_2(T^jx), j \geq 0)$,} 
				is aperiodic. 
				{This implies that the $2$-coding of $x_{\min}$ is aperiodic, and hence, since the orbit of $x_{\min}$ is dense, the $2$-coding of every path in $X$ is aperiodic.}
				Fix a large $n \geq 3$.
				The idea is to reduce $C_2(v(n,1))$ to a long initial block of the famous Prouhet-Thue-Morse sequence, which is known to be aperiodic.\\
				\indent
				{For $n \geq 2$ we have}
				\be\label{eq:vertexcoding}
				C_{n-1}(v(n,1))=v(n-1,1)v(n-1,2)v(n-1,3) \dots v(n-1,2n-2), 
				\en
				but note that {for $n \geq 4$}
				\be
				C_{n-2}(v(n-1,3))=v(n-2,1)v(n-2,3)v(n-2,2)v(n-2,) \dots v(n-2,2n-4).
				\en
				This switch in order of adjacent symbols occurs at every level {$n-1 \geq 3$.}
				Working towards the expansion of $C_2(v(n,1))$ as a string on symbols $v(2,m), m=1,2,3,4$, in Equation \ref{eq:vertexcoding} replace each $v(n-1,i)$ by $C_{n-2}(v(n-1,i))$,
				then in the result (which is $C_{n-2}(v(n,1))$)
				replace each $v(n-2,i)$ by $C_{n-3}(v(n-2,i))$, etc., until we finally arrive at $C_2(v(n,1))$. 
				Note that, reading from left to right in any $C_{j}(v(n,i)), 2 \leq j<n$, from time to time the symbols $v(j,2)$ and $v(j,3)$ switch order. \\
				\indent
				We will now repeat this process of repeatedly expanding $C_{n-1}(v(n,1))$, deliberately losing some information at each step, 
				to produce for each $j=n-1, \dots, 2$ a string $\tilde C_j$ on the alphabet $\{v(j,2),v(j,3),0_j\}$.
				In Equation \ref{eq:vertexcoding}, 
				replace each $v(n-1,i)$ for $i$ not equal to $2$ or $3$ by $0_{n-1}$, arriving at a block $\tilde C_{n-1}$ on the alphabet $\{v(n-1,2),v(n-1,3),0_{n-1}\}$.\\
				\indent
				Then in $\tilde C_{n-1}$ replace each $v(n-1,2)$ by $C_{n-2}(v(n-1,2)$, each $v(n-1,3)$ by $C_{n-2}(v(n-1,3))$, each $0_{n-1}$ by $(0_{n-2})^{2n-4}$, 
				and finally each $v(n-2,i)$ for $i$ not equal to $2$ or $3$ by $0_{n-2}$, 
				arriving at a block $\tilde C_{n-2}$ on the alphabet $\{v(n-2,2),v(n-2,3),0_{n-2}\}$.
				Noting that $|C_{n-2}(v(n-1,i))|=2n-4$ for all $i$, we have formed a $1$-block factor $\tilde C_{n-2}$ of $C_{n-2}(v(n,1))$ on the alphabet $\{v(n-2,2), v(n-2,3),0_{n-2}\}$.\\
				\indent
				Continue analogously until in
				the end we have a block $\tilde C_2$ on the alphabet $\{v(2,2),v(2,3),0_2\}$ which is a symbol-by-symbol ($1$-block) factor of $C_2(v(n,1))$. \\
				\indent
							{If the forward coding of $x_{\min}$ by vertices at level $2$ were periodic, then  there would be a nonempty block $P$ on the symbols {$v(2,m) \, (m=1,2,3,4)$}, and a (possibly empty) prefix $Q$ of $P$, such that 
							for large enough $n$ we would have
							$C_2(v(n,1))=P^k Q$ for some $k \geq 2$.} 
				Then $\tilde C_2$ would have the form $\tilde P^k \tilde Q$ with $\tilde P, \tilde Q$ $1$-block images of $P,Q$.\\
				\indent
				But note that 
				{for each $j=n-1, \dots, 3$} a symbol $v(j,2)$ in $\tilde C_j$ expands 
				{to a block {$0_{j-1} v(j-1,2) v(j-1,3) 0_{j-1}^{2j-5}$ in $\tilde C_{j-1}$, while a symbol 
					$v(j,3)$ in $\tilde C_j$ expands 
					to a block $0_{j-1} v(j-1,3) v(j-1,2) 0_{j-1}^{2j-5}$ in $\tilde C_{j-1}$.}}
				Thus if we ignore {the symbol $0_{j-1}$ in each} $\tilde C_j$ we see substrings on $\{a=v(\cdot,2),b=v(\cdot,3)\}$ that expand according to the Prouhet-Thue-Morse (PTM) substitution $a \to ab, b \to ba$.\\
				\indent
				If $\tilde C_2$ were of the form $\tilde P^k \tilde Q$ as above, {with $n$ large enough that $k \geq 2$, then deleting the {symbol $0_2$ from} $\tilde P$ and $\tilde Q$ would present a long initial block of the PTM sequence in the form $P_0^k Q_0$ with $P_0,Q_0$ blocks on $\{a,b\}$, and $|P_0| \geq 1$ {(because $\tilde C_2$ contains symbols other than $0_2$)}.}
				But the PTM sequence $abba\,  baab \dots$ is aperiodic, in fact it cannot begin with $BB$ for any block $B$.
				Therefore the forward coding of $x_{\min}$ by vertices at level $2$ is not periodic, and hence the $2$-coding of $x_{\min}$ is not periodic.
			\end{proof}
\end{obs}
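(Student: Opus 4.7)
The plan is to reduce aperiodicity for every path to aperiodicity for a single well-chosen path, and then to embed the Prouhet-Thue-Morse (PTM) sequence as a symbol-by-symbol factor of a long initial block of that path's $2$-coding. First I would invoke minimality: since the orbit of the minimal path $x_{\min}$ is dense and the $2$-coding is continuous, if the $2$-coding of some path were periodic of period $p$, the same period would pass to $x_{\min}$; hence it suffices to prove that the forward $2$-coding of $x_{\min}$, equivalently the sequence $(v_2(T^j x_{\min}))_{j \geq 0}$ of level-$2$ vertices visited, is aperiodic. This forward sequence is exactly $C_2(v(n,1))$ read as $n \to \infty$, so a periodic forward coding would force $C_2(v(n,1))$ to be of the form $P^k Q$ (with $Q$ a prefix of $P$) for all large $n$.

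Next I would unpack the diagram's combinatorics at each level. For $n \geq 2$ one reads off directly that $C_{n-1}(v(n,1)) = v(n-1,1) v(n-1,2) v(n-1,3) \cdots v(n-1,2n-2)$, while $C_{n-2}(v(n-1,3))$ has the adjacent pair $v(n-2,2), v(n-2,3)$ swapped relative to $C_{n-2}(v(n-1,2))$. This local swap between indices $2$ and $3$ at each level is the only thing I need; everything else I will throw away. To do this, at each level $j$ I would introduce a symbol $0_j$ for every vertex other than $v(j,2)$ and $v(j,3)$, obtaining a $1$-block factor $\tilde C_j$ of $C_j(v(n,1))$ on the three-letter alphabet $\{v(j,2), v(j,3), 0_j\}$. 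Iteratively expanding $\tilde C_{n-1} \to \tilde C_{n-2} \to \cdots \to \tilde C_2$ and using the level-$j$ expansion rules $v(j,2) \mapsto 0_{j-1} v(j-1,2) v(j-1,3) 0_{j-1}^{2j-5}$ and $v(j,3) \mapsto 0_{j-1} v(j-1,3) v(j-1,2) 0_{j-1}^{2j-5}$, I would observe that after deleting every $0_j$ symbol, the remaining subsequence on $\{a,b\} := \{v(\cdot,2), v(\cdot,3)\}$ is generated by the PTM substitution $a \mapsto ab$, $b \mapsto ba$.

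Finally I would conclude by contradiction. If $C_2(v(n,1)) = P^k Q$ for some $k \geq 2$ and all large $n$, then $\tilde C_2$ is of the form $\tilde P^k \tilde Q$ for the $1$-block images $\tilde P, \tilde Q$; deleting all $0_2$'s produces a nonempty block $P_0^k Q_0$ on $\{a,b\}$ that is an initial segment of the PTM sequence. But the PTM sequence has no nontrivial initial square $BB$, let alone $B^k$ for $k \geq 2$, so no such factorization exists. Therefore $x_{\min}$ has aperiodic forward $2$-coding, and by minimality every path does.

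The main obstacle I would expect is verifying cleanly that the bookkeeping at each expansion step really does preserve a PTM-structured subsequence after erasing the filler symbols $0_j$; in particular one must confirm that the swap of $v(j,2)$ and $v(j,3)$ in the two expansion rules is the \emph{only} asymmetry between the images, and that the ``erase the $0_j$'s'' operation is a genuine factor map compatible with the substitution dynamics rather than an ad hoc cut-and-paste.
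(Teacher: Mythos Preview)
Your proposal is correct and follows essentially the same route as the paper: reduce to the forward level-$2$ vertex coding of $x_{\min}$ via minimality, form the $1$-block factors $\tilde C_j$ on $\{v(j,2),v(j,3),0_j\}$, use the expansion rules $v(j,2)\mapsto 0_{j-1}v(j-1,2)v(j-1,3)0_{j-1}^{2j-5}$ and $v(j,3)\mapsto 0_{j-1}v(j-1,3)v(j-1,2)0_{j-1}^{2j-5}$ to extract a PTM subsequence after erasing the $0$'s, and conclude from the fact that the PTM sequence has no initial square. The obstacle you flag---checking that the erasure of the filler symbols really yields a bona fide $1$-block factor compatible with the PTM substitution---is exactly the bookkeeping the paper carries out explicitly.
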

}

\begin{obs}\label{obs:recog}	
	{In the GJ example, identifying at every level $j>2$ the vertices other than $v(j,3)$ produces a sequence of morphisms that forms a recognizable family.}
	\end{obs}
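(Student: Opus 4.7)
The plan is to make precise the sequence of morphisms produced by the identification, deduce aperiodicity of the factor from Observation \ref{obs:aperiodic}, and then invoke a Moss\'e-type recognizability theorem for aperiodic S-adic systems (such as those developed in \cite{Berthe2017}).

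First I would verify that the identification gives a well-defined two-letter substitution at each level. Write $1_j$ for $v(j,3)$ and $0_j$ for the lumped class of the remaining vertices at level $j$, for $j>2$. Because in the GJ diagram each pair of vertices at adjacent levels is joined by exactly one edge (see the description used in Observation \ref{obs:GJ}), the vertex coding $C_j(v)$ at any $v$ at level $j+1$ is a permutation of $v(j,1),\dots,v(j,2j)$, so its factor through the identification is a word of length $2j$ over $\{0_j,1_j\}$ with exactly one $1_j$. Using part (1) of Proposition \ref{prop:GJSNE} and the explicit edge-labeling rule recalled there, I would check that the position of $1_j$ in this factor word depends only on whether $v=v(j+1,3)$ or not, and in particular is the same for every $v(j+1,i)$ with $i\neq 3$. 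This makes the factor a genuine substitution $\tau_j:\{0_{j+1},1_{j+1}\}^{*}\to\{0_j,1_j\}^{*}$ with $|\tau_j(\alpha)|=2j$ for each letter $\alpha$, and the chain $(\tau_j)_{j\geq 3}$ is the candidate family of morphisms.

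Second, I would transfer the aperiodicity supplied by Observation \ref{obs:aperiodic} to the factor. Since the factor map is defined by a $1$-block code on $2$-codings (the identification at level $j$ descends to an identification on the alphabet $A_2$ via the truncation $A_j\to A_2$ of Definition \ref{def:bb}), a periodic point of the factor subshift would pull back to a periodic $2$-coding of some point in $X$, contradicting Observation \ref{obs:aperiodic}. Hence the factor system is aperiodic.

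Third, I would apply a recognizability result for aperiodic S-adic sequences. Primitivity of the sequence $(\tau_j)$ is easy: each $\tau_j(\alpha)$ contains the letter $1_j$ once and the letter $0_j$ the remaining $2j-1$ times, so after telescoping any two consecutive levels every letter occurs in the image of every letter. With primitivity plus aperiodicity in hand, the Moss\'e-type theorem for S-adic sequences in \cite{Berthe2017} applies and gives recognizability of the family $(\tau_j)$.

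The main obstacle is the compatibility check in the first step: one must confirm, by reading off the edge orderings in the GJ diagram, that the position of $v(j,3)$ in the basic block at $v(j+1,i)$ really is independent of $i\neq 3$, so that the identification genuinely descends to a substitution rather than a multi-valued rule. This is a concrete combinatorial verification rather than a conceptual difficulty, but it is the place where the specific structure of the GJ example, as codified in Proposition \ref{prop:GJSNE}(1), is actually used.
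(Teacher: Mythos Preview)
Your overall strategy---compute the two-letter morphisms, establish aperiodicity, and invoke \cite{Berthe2017}---matches one of the two routes the paper takes. Your verification that the identification yields a genuine substitution is correct; the resulting morphisms are $E_j\mapsto E_{j-1}^2 D_{j-1} E_{j-1}^{2j-5}$ and $D_j\mapsto E_{j-1} D_{j-1} E_{j-1}^{2j-4}$ (in the paper's indexing).

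However, your step two contains a real gap. You argue that a periodic point of the $\{D,E\}$-factor ``would pull back to a periodic $2$-coding.'' This implication runs the wrong way: a $1$-block factor of an aperiodic minimal subshift can certainly be periodic (collapse all symbols to one, for instance), and minimality does not rescue the argument. Observation~\ref{obs:aperiodic} gives aperiodicity of the $2$-coding, not of its factors. What does work is that the \emph{arguments inside the proof} of Observation~\ref{obs:aperiodic} adapt directly: that proof tracks whether the pair $v(j,2),v(j,3)$ appears in the order $ab$ or $ba$ at positions $2,3$ of each block, and this is exactly the same information as whether $D_{j-1}$ sits at position $3$ or $2$, so the Prouhet--Thue--Morse aperiodicity transfers to the $\{D,E\}$-sequence. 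The paper phrases it just this way (``by the arguments in the proof of Observation~\ref{obs:aperiodic}'') rather than citing the observation as a black box.

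It is also worth knowing that the paper gives a short direct proof that bypasses aperiodicity and \cite{Berthe2017} entirely: since $D_{j+1}D_{j+1}$ never occurs, any block $D_jE_j^qD_j$ in a bisequence forces $q\in\{2j-2,2j-1,2j\}$, and each value of $q$ determines a unique parent $2$-block on $\{D_{j+1},E_{j+1}\}$ together with a unique alignment. This yields recognizability by hand.
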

	\begin{proof} 
		{Denote by $\mathcal V_j$ the set of vertices in our diagram at level $j$. 
			For each $j \geq 2$ define an alphabet $A_j=\{D_j,E_j\}$ and a 
			(many-to-one) { map} $\phi_j: \mathcal V_j \to A_j$ as follows. 
		Assign to $v(j,3)$ the symbol $D_j=\phi_j(v(j,3))$, and to each $v(j,i)$ for $i \neq 3$ assign the symbol $E_j=\phi_j(v(j,i))$.
			When we expand each symbol (vertex) $v(j,i)$ to $C_{j-1}(v(j,i))$ the effect on the {$\phi_j$-images} 
				{($D_j,E_j,D_{j-1},E_{j-1}$)
				at both levels produces the morphism 
				\be\label{eq:morphism}
{	E_{j} \to E_{j-1}^2D_{j-1}E_{j-1}^{2j-5}, \qquad D_{j} \to  E_{j-1}D_{j-1}E_{j-1}^{2j-4}}
	\en
	 {for all $j \geq 3$}, in the sense of concatenation of blocks.}\\
\indent
			{Denoting as usual by $A_j^+$ the set of nonempty words on the alphabet $A_j$}, {Equation (\ref{eq:morphism}) defines} {a sequence of morphisms $\tau_{j}: A_{j} \to A_{j-1}^+$}, as (for example) in \cite{Berthe2017}. 
		By keeping track of the position of $D_j$ in codings of paths {by images of vertices at level $j$ under $\phi_j$} one can prove directly that the sequence is {\em recognizable}, in the sense that every sequence on the alphabet $A_j$ has at most one {\em desubstitution}, or {\em factorization}, on the alphabet $A_{j+1}$.
		To see this, suppose that we are given a bisequence on the alphabet $A_j$.
		When we see a block $F(j,q)=D_j E_j^q D_j$, {since the block $D_{j+1}D_{j+1}$ does not appear in the coding of any path by the images of vertices at level $j+1$, it} must be the case that $F(j,q)$ is a subblock, in a uniquely determined position, of
		\be
		\begin{aligned}
		&E_{j+1} E_{j+1} = E_{j} E_j D_{j}E_{j}^{2j-3} \,\, E_{j} E_j D_{j}E_{j}^{2j-3}\quad\text{ if } q=2j-1,\\
		&E_{j+1} D_{j+1} = E_{j} E_j D_{j}E_{j}^{2j-3}  \,\, E_{j} D_j E_{j}^{2j-2}\quad\text{ if } q=2j-2, \text{ or }\\
		&D_{j+1} E_{j+1}= E_{j} D_j E_{j}^{2j-2} \,\,  E_{j} E_j D_{j}E_{j}^{2j-3}\quad\text{ if } q=2j.
		\end{aligned}
	\en
	Thus the coding of an orbit by $D_2,E_2$ determines its coding by $D_j,E_j$ for all $j > 2$.
		This also follows from \cite{Berthe2017}*{Theorem 5.1}, since each of these alphabets has only two elements and each infinite bisequence generated by the sequence of morphisms is aperiodic, by {the arguments in the proof of} Observation \ref{obs:aperiodic}.}
	\end{proof}
	\indent
	Note that the natural sequence of morphisms for the diagram defined by the coding of vertices at each level $j$ beyond some level $j_0$ by the vertices at level $j-1$ {\em cannot} be recognizable, because the GJ system is nonexpansive.
		(The coding of any path by vertices at some level would then determine the codings at all subsequent levels and hence the entire path.)	
		Moreover, the BV system corresponding to the sequence of morphisms $\tau_j$ is of finite topological rank (as well as expansive). In fact it has topological rank $2$: it cannot be $1$ (conjugate to an odometer) by \cite{FPS2017}*{Theorem 5.3} and Observation \ref{obs:aperiodic}, {since for every $n>1$} the $n$-coding of the orbit of any path is not periodic. \\

We show now how the GJ example can be modified to spoil the strict requirements of the definition of $SSNE$ to produce an example of a nonexpansive system that is not even $SNE$ and is not conjugate to any odometer. (By \cite{dm2008} any such example necessarily has unbounded width.) Further, this example is very well timed (see Definition \ref{def:WS}).

 {In \cite{dm2008}, Downarowicz and Maass introduced
		a handy way to visualize paths and their orbits in a $BV$ system by means of 3-sidedly infinite arrays of {\em $j$-symbols}}. 
		{Every vertex $v$ at level $j \geq 0$ has an associated $j$-symbol which is also labeled $v$. 
	{The $j$-symbol} is a finite rectangular matrix with $j+1$ rows consisting of subrectangles ($i$-symbols for $0 \leq i \leq j$), as described in \cite{dm2008}*{p. 741}.}
	{An array represents the entire orbit of a path and its $k$-codings for all $k \geq 0$.
		The path itself is indicated by an arrow pointing to the left edge of its time $0$ rectangle {at level $0$}, and this arrow indicates, by extending it vertically downward, all the other rectangles (vertices) through which the path passes at levels $n>0$.
		We call this extension the {\em vertical} that corresponds to the path. 
		Further development and use of these arrays can be seen in \cite{BezuglyiKwiatkowskiMedynets2009} and \cite{Hoynes2017}.}

 The family of arrays determined by a diagram has a type of consistency called ``agreeable" in \cite[Def. 3.2]{BezuglyiKwiatkowskiMedynets2009}: each $j$-symbol with a fixed name {$v$} has for its first {$j$} rows the same concatenation of $(j-1)$-symbols.
 Conversely, given an agreeable family of arrays, we can construct its unique associated $BV$ diagram (which may or may not be properly ordered).
{For each vertex $v$ at level $j$, the names, in order, of the {$(j-1)$-symbols} comprising the first {$j$} rows 
	of its associated $j$-symbol specify the edges connecting certain vertices at level $j-1$ to it in a certain order, possibly with repeats.} 
{This concatenation of $(j-1)$-symbols determines the $(j-1)$-basic block {$B_{j-1}(v)$} at the vertex v, since it lists in order the paths from the root to $v$.}
The Vershik map $T$ on the diagram corresponds to sliding each array one ``notch", i.e. level-$0$ rectangle width, to the right. 
Thus there is a natural correspondence between 
$BV$ systems, their diagrams, and agreeable families of arrays; in the following we deal with them interchangeably.

 \begin{example}\label{ex:modGJ}
 	 Denote the system in the GJ example \cite{GJ2000}*{Figure 4, p. 1699} by $X$.\\
  	\indent
 	 	In their proof in \cite{dm2008}, Downarowicz and Maass make use of a modification of the Bratteli diagram and hence of all arrays representing orbits of the system.  See \cite[pp. 743--744, Figure 4]{dm2008} and \cite[p. 204]{Hoynes2017}).
 	We will use a version of their splitting technique to ensure that for every $i \geq 1$ and every $j \geq i +1$  the two vertices at level $j$ in the $i$'th Morse component have different $1$-basic blocks.\\
 	\indent 	
 	For every $j \geq 2$ and every $i <j$, we replace the $j$-symbol $v_i = v(j, 2i)$ with two symbols $v_i'$ and $v_i''$ so that {$|B_{1}(v_i')| =  |B_1(v(j-1, 1))|$} and the concatenation of $B_{1}(v_i')$ and $B_{1}(v_i'')$ is $B_{1}(v_i)$.    
 	In particular,  $B_{1}(v_i')$ is a proper prefix of $B_{1}(v_i)$.
 	In the DM arrays, every occurrence of the $j$-symbol with label $v_i$ is then replaced by two $j$-symbols labeled $v_i'$ and $v_i''$ respectively. 
 	We leave the $j$-symbols for all other vertices at level $j$ unchanged, so their $1$-basic blocks are the same in the new diagram as they are in $X$. 
 	The result is that the left vertices in every Morse component of $X$ have been split into two vertices, whereas the right vertices remain intact.\\
 	\indent 	
 	Note that for every $j \geq 2$, the modifications at level $j$ extend vertical bars (i.e rectangle boundaries) at level $j-1$ in the original diagram for $X$ by only one level. None of these bars is extended further by a modification at level $j +1$. Hence, no new infinitely long vertical lines consisting entirely of rectangle boundaries are created.  
 	This means that after all modifications, the system represented by the diagram remains in the class of  properly ordered systems. 
 	Moreover, every orbit still meets every vertex, so the system is simple.
 	Also note that since this new system is conjugate to $X$, it is not conjugate to any odometer.\\
 	\indent
 	The new system has the property that at any level $j \geq 2$ {and $i = 1, ...,j-1$}, the $1$-basic block at the right vertex $v(j, 2i+1)$ of {the i'th} Morse component through level $j$ is longer than the $1$-basic blocks at the two new vertices.  
 	Hence, the right vertex cannot be $1$-equivalent to either of these new vertices. 
 	{We claim that for every $k\geq 1$ paths that were $k$-equivalent in the old diagram are no longer $k$-equivalent in the new diagram.  		
 		This is because $k$-equivalent paths in the old diagram are also $1$-equivalent in that diagram. 
 		By Proposition \ref{prop:GJSNE}, any pair of paths that are $1$-equivalent in the original diagram {are contained in a Morse component at all levels after which they first differ}, so they can no longer be $1$-equivalent in the new diagram.
 	Then since $k$-equivalence implies $1$-equivalence, the paths are no longer $k$-equivalent in the new diagram.
 	}\\
   	\indent 	
 	{The $1$-coding of every path in $X$ is periodic with period length 2. 
 	However, at every level $j >2$ {the $2$-basic block at $v(j,3)$ differs from the $2$-basic blocks at at all other vertices,} and as a result the $2$-coding is not periodic (see Observation \ref{obs:aperiodic}).  
 	We will exploit this property of $2$-basic blocks to show that no new $2$-equivalent paths are created by our modifications. It then follows that the new system has no $2$-equivalent paths, hence is not $SNE$.}\\
 \indent 
      	By {Prop \ref{prop:samecodings}}, any $2$-equivalent pair in the new system has the same new $2$-coding.  In fact, it is the image of a pair in $X$ that has the same old $2$-coding.
 	This is because there is an invertible mapping between the old and the new codings.  
 	Specifically, in both the old and the new system each path in $X$ is represented by a vertical in its array. 
 	Row 2 of the array displays the $2$-coding of the path and the placement of the dot is specified by the vertical. 
 	The splitting of rectangles and relabeling at level 2 is reversible and does not change the position of the vertical.  
 	 	So a pair of $2$-equivalent paths in the new diagram is the image of a pair in the diagram for $X$ with the same $2$-coding. 
 	 	We now show that any such pair {in $X$} is $2$-equivalent.\\
  \indent 	  
  	Let $x$, $x'$ be paths in the diagram for $X$ with the same $2$-coding.
 	Using the preceding observations about $D_j$ and $E_j$ {(see Observation \ref{obs:recog}),} we argue inductively that for every $j \geq 3$, any time the orbit of $x$ is minimal from the root to $v(j,3)$, the orbit of $x'$ is as well (and vice versa). 
 	{Then because all $j$-basic blocks at level $j$ have the same length ($\dim v(j,1)$)}, any time the orbit of one of these paths changes vertices at level $j$, the other one does as well.  
 	Since $x$ and $x'$ have the same 2-coding, it follows that they must be $2$-equivalent at level $j$ (i.e not only do they have the same $2$-basic block at level $j$ but their dot is in the same place).\\
 	\indent
  	{First note that, since $x$ and $x'$ have the same $2$-coding, their orbits must meet {$v(2,3)$} always at the same time; in particular,  
 	any time the orbit of one of these paths is minimal from the root into $v(2,3)$ the orbit of the other is as well.\\  
 	\indent
 	Next fix $j \geq 2$ and assume that the orbits of $x$ and $x'$ are minimal into $v(j,3)$ always at the same time.}  
 	Since the number of paths is the same from the root into any vertex at level $j$, these orbits must be minimal into each vertex at level $j$ always at the same time.\\
 	\indent
 	{Now suppose that for some $m$, $T^mx$ is minimal from the root into $v(j+1,3)$, so that  $\phi_{j+1}(v_{j+1}(T^mx))=D_{j+1}$ and $T^{m+\dim (v(j,1))}x$ is minimal to $v(j,3)$, {which maps under $\phi_j$ to $D_j$}.
 		We claim that {since the orbits of $x$ and $x'$ always hit $v(j,3)$ at the same time,} we cannot have 
 	$v_{j+1}(T^mx')=v(j+1,i)$ for some $i \neq 3$, i.e. we cannot have $\phi_{j+1}(v_{j+1}(T^mx'))=E_{j+1}$ rather than $D_{j+1}$.
 	This follows from looking at the {\em next} vertices at level $j+1$ hit by the orbits of $x$ and $x'$. 
 	Since the block $v(j+1,3) v(j+1,3)$ cannot appear in the coding of any path by vertices at level $j+1$, the orbit of $x$ next hits $v(j+1,i)$ for some $i \neq 3$, which has image $E_{j+1}$, while the orbit of $x'$ next hits a vertex with image either $E_{j+1}$ or $D_{j+1}$.
 	As seen in {the proof of} Observation \ref{obs:recog},
 	in the block $D_{j+1} E_{j+1}$ (on symbols $D_j,E_j$) in the coding of the orbit of $x$ (by images of vertices under $\phi_{j}$), consecutive appearances of $D_j$ are separated by a distance $2j$, while in the two possible blocks $E_{j+1} E_{j+1}$ and $E_{j+1} D_{j+1}$ in the coding of the orbit of $x'$ the consecutive appearances of $D_j$ are separated by distance either $2j-1$ or $2j-2$. 
 	Therefore $v_{j+1}(T^mx')=v(j+1,3)$.}\\
 	\indent
 	 Since $T^{m+\dim (v(j,1))}x$ is minimal to $v(j,3)$ and the orbits of $x$ and $x'$ always hit $v(j,3)$ at the same time, we must have that $T^{m+\dim (v(j,1))}x'$ is also minimal to $v(j,3)$.  
 	 {Since the dot for both $\phi_j(T^{m+\dim (v(j,1))}x)$ and $\phi_j(T^{m+\dim (v(j,1))}x')$ is at the beginning of an appearance of $D_j$, and $D_j$ appears only once in each $\phi_j(v(j+1,3))$,
 	  applying $T^{-\dim ((v(j,1))}$  
 	shows that both $T^mx$ and $T^mx'$ are minimal from the root into $v(j+1,3)$.} 
   	 \end{example}

\section{Well timed and untimed systems}\label{sec:welltimed}
In this section we define several classes of nonexpansive $BV$ systems ($W,W_0,DM2$, $H2, U,U_0,U_1$, and $U_2$) according to various possibilities for the existence of pairs of paths with cuts. 
	{Recall that all systems under consideration are nonexpansive, properly ordered, and simple.}
	It will turn out that every {such} {system} is conjugate to {a system in} exactly {one of $W$  (``well timed") or $U$ (``untimed").}
 
{\begin{definition} 
		{For any class $S$ of systems, we will denote the class of systems conjugate to some system in $S$ by $CS$:
			\be
			CS=\{X: \text{there is } Y \in S \text{ such that } X \text{ is conjugate to }Y\}.
			\en
			We denote the class of systems not conjugate to any system in $S$ by $NCS$, and the class of systems not in $S$ by $\neg S$.
			Note that it is not true that if $X,Y \in CS$ then $X$ must be conjugate to $Y$.}
\end{definition}}

In their proof that every bounded width system is either expansive or conjugate to an odometer, Downarowicz and Maass \cite{dm2008} considered a class of systems that they called {\em Case (2)}, which we denote here by {\em $DM2$}. 
In \cite{Hoynes2017} Hoynes' Case (2) is a slightly weaker condition, which we call $H2$, {apparently} still sufficient for the proof to succeed. 
Downarowicz and Maass as well as Hoynes actually used stronger properties, {opposite to well timed}, which here we call {\em very untimed ($U_0$)} and $U_2$. 
Here is a list of some relevant classes of systems, obtained by varying quantifiers. 

\begin{definition}\label{def:WS}
	We say that a depth $k$ pair of paths {\em has long cuts} if for every $n>k$ the pair has an $n$ cut.
\end{definition}

\begin{definition}\label{def:classes}
{We define the following classes of systems:}

		\noindent
	(1) 		We say that a system is {\em well timed} if for every $k \geq 1$ for every $j>k$ there is a depth $k$ pair with a $j$ cut. 
		We denote the class of well timed systems by $W$.
	
\noindent(2) 		We say that a system is {\em very well timed} if for every $k \geq 1$ there exists a depth $k$ pair with long cuts. 
		We denote the class of very well timed systems by $W_0$.
	
	\noindent
	(3) $DM2$: For infinitely many $k$ there is a $j(k)>k$ such that no depth $k$ pair has a $j(k)$ cut. 
	The smallest such $j(k)$ is called a {\em $k$ cutoff}.
	
	\noindent
	(4) $H2$: For infinitely many $k$ for every depth $k$ pair $x,x'$ there is a $j(k,x,x') > k$ such that $x,x'$ has no $j(k,x,x')$ cut. 
	The smallest such $j(k,x,x')$ could be called a {\em $k$ pair cutoff for the pair $x,x'$.}
		
	\noindent
	(5) $U$ (untimed): For every $k$ there is a $k$ cutoff. (I.e., for every $k \geq 1$ there is a $j(k)>k$ such that no depth $k$ pair has a $j(k)$ cut.)
	
		\noindent
	{(6) $U_0$ (very untimed): For every $k$ no depth $k$ pair has a $k+1$ cut. (I.e., for every $k$, $k+1$ is a $k$ cutoff).}
	
	\noindent
	(7) $U_2$: For every $k$ there is a depth $k$ pair with no $k+1$ cut.
	
	\noindent
	(8) $U_1$: For infinitely many $k$ there is a depth $k$ pair $x,x'$ and there is a $j(k,x,x')>k$ such that the pair $x,x'$ has no $j(k,x,x')$ cut. (I.e., for infinitely many $k$ there is a depth $k$ pair with a pair cutoff.)
\end{definition}

{
Recall that the GJ example is $SNE$ and is very well timed.} 
 The modified GJ example (Example \ref{ex:modGJ}) is not $SNE$ but it is very well timed, because 
the only changes we made to the DM arrays were to add additional vertical bars, which will not destroy the existing cuts.

\begin{remark}\label{rem:classes}
	In \cite{dm2008} the proof of the main theorem in Case (2) begins by telescoping any system in $DM2$ so that the result is in $U_0$. Hoynes \cite{Hoynes2017}*{Remark 4.4} does not see why this should always be possible, {but suggests that changing the universal quantifier to existential, i.e. replacing $DM2$ with $U_1$, does allow one to telescope such a system to one in $U_2$, and that should be enough to let the proof proceed.}\\
		\indent 
	{The proof in \cite{Hoynes2017} assumes $H2$, telescopes so that for every $k \geq 1$ there is a depth $k$ pair, and then telescopes to obtain a system in $U_2$. 
		The proof of Sublemma 4.1, though, applies $H2$ to possibly ineligible pairs $y_i,y_{i'}$, {because $H2$ is not closed under telescoping}.
		Because the telescoped system is in $U_2$, for every $i$ there is a pair $x_i,x_i'$ with no $i+1$ cut. 
		But the pair $y_i,y_{i'}$ of the proof could be depth $i'$ and without a cutoff, if it were the image under the telescoping of a pair of a depth other than one of the infinitely many ``good" $k$ in the definition of $H2$ (see Proposition \ref{prop:newlemma}).}\\
	\indent
		Indeed, in Example \ref{ex:DM2WW} we present a system that is in both classes $DM2$ {(vacuously)} and $WW$ {(see Definition \ref{def:WW}, below)} {and for which telescoping to any strictly increasing sequence of levels takes it out of the class $H2$ (and hence out of $DM2$).}
    Such a system cannot be telescoped into $U_0$, because then it would be in both $CU$ and $WW=CW$ {(see Proposition \ref{prop:CW})}, but by Theorem \ref{thm:CWimpliesNCU} these classes are disjoint.\\
			\indent
			We think that Sublemma 4.1 of \cite{Hoynes2017} can be proved as follows.
			 {Assuming that $H2$ is satisfied nonvacuously, 
			  let $i_0$ and all the other $i$’s mentioned in the argument be good $k$’s according to the definition of $H2$.}
			  They may not fill up an interval in the integers, but given $L$ one can pick a sequence of them of length $L$ and proceed to write the same argument, being careful to choose the {pairs $x_i,x_i'$ so that their cutoffs $j(i,i')$ interleave the levels with good $k$'s.} 			
				\end{remark}

	\begin{remark}\label{rem:untimedtel}
		(1) { $U_0 \subset U \subset DM2 \subset H2$, and $U_2 \subset U_1$.}\\
	\indent	
	(2) {Each of the classes $W,W_0,U,U_0$ is closed under telescoping.} 
	{We will later {(in Remark \ref{rem:telw})} provide a proof of this for $W$ and $W_0$.}
	 To see that the very untimed property persists under telescoping, note that if in a telescoping to levels {$\{ n_l, l \geq 0\}$} of a very untimed system we found a depth $j$ pair with a $j+1$ cut, that pair would correspond to a pair in the original system of some depth $k \geq n_j$ with an $n_{j+1}$ cut, and hence with a $k+1$ cut{---cf. Proposition \ref{prop:newlemma}, (3) and (4)}.
  	\end{remark}

\begin{example}\label{ex:odoms}
		Every odometer presented with one vertex at every level is in $U_0$ (very untimed).
	 To see this, suppose that $x,x'$ are depth $k \geq 1$, so that at some time $m$ in their orbits they follow different edges from level $k$ to level $k+1$, {i.e. $(T^mx)_k \neq (T^mx')_k$.}\\
	    \indent
	    Because $(T^jx)_k$ and $(T^jx')_k$, $j \in \Z$, follow the same periodic sequence of edges as $j$ varies, we have that, for all $j \in \Z$, $(T^jx)_k \neq (T^jx')_k$. In particular, $x$ and $x'$ cannot have a $k+1$ cut, since this would require that for some $j$ they both follow the unique minimal edge from level $k$ to level $k+1$.\\
	    \indent
	    Note that in this example for every $k \geq 1$ there is a depth $k$ pair, but that is not a requirement in the definition of $U_0$.\\
	    \indent 
	    We used here a general principle that applies in any system: If level $k+1$ is strongly uniformly ordered with respect to level $k$ (see Definition \ref{def:suo}) and $x,x'$ follow edges with different ordinal labels from level $k$ to level $k+1$, so do $T^jx$ and $T^jx'$ for all $j \in \Z$.
	    
\indent
{On the other hand, every odometer can be presented (up to conjugacy) with at least two vertices per level after the root
	and all levels strongly uniformly ordered.
	 We claim that for such a system for every $k \geq 1$ there is a depth $k$ pair with a $k+1$ cut, 
	but no depth $k$ pair can have a $k+2$ cut.}
	{Thus every such system is in $U \setminus U_0$, with a $k$ cutoff of $k+2$ for every $k \geq 1$.}\\
	\indent
	At each level $n \geq 1$, the system has vertices $v(n,1), \dots, v(n,q_n)$ for some $q_n \geq 2$.
Edges with source $v(n,i)$ have ordinal label $i$.\\
\indent
Given $k \geq 1$, let $x$ be a path that is minimal from the root to $v(k+1,1)$, and let $x'$ be a path that is minimal from the root to $v(k,1)$ at level $k$ and then follows the edge (labeled $1$) to $v(k+1,2)$ at level $k+1$. 
{Because $T^jx,T^jx'$ are at strongly uniformly ordered vertices at level $k+1$ for all $j \in \Z$, 
they follow edges with the same ordinal label from level $k$ to level $k+1$}.
{Thus
$x,x'$ have the same $k$-coding and hence the pair is depth $k$. 
The paths $x,x'$ also follow minimal edges to level $k+1$, so they have a $k+1$ cut. 
Thus the system is not in $U_0$.}\\
\indent
We show now that if $x,x'$ is a depth $k$ pair, then it cannot have a $k+2$ cut, so the system is in $U$, with $k$ cutoff equal to $k+2$.
For suppose that $x,x'$ is a depth $k$ pair. 
Applying a power of $T$ if necessary, we may assume that these paths 
follow different edges from level $k$ to level $k+1$. 
{Because level $k+1$ is strongly uniformly ordered {with respect to level $k$},} for all $j \in \Z$ the paths $T^jx$ and $T^jx'$ follow different edges from level $k$ to level $k+1$, and hence they are at different vertices at level $k+1$:
\be
v_{k+1}(T^jx) \neq v_{k+1}(T^jx').
\en
Thus the edges downward from these vertices to level $k+2$ always have different ordinal labels, precluding existence of a $k+2$ cut.
 \end{example}

	\begin{comment}
\indent
Question: Is every system conjugate to an odometer (in particular every odometer---def might be that all levels are uniformly ordered) in $U$? (It's in $CU_0 \subset CU$.)\\
\indent
We claim that it's in a related class: There are infinitely many levels $\{n_j\}$ such that no depth $n_j-1$ pair can have an $n_{j+1}$ cut. (This is a little bit like $U_1$.) \\
\indent
Any system conjugate to an odometer has a telescoping with infinitely many uniformly ordered levels \cite{FPS2017}. 
Suppose that $n_j$ and $n_{j+1}$ are two levels of the telescoping, so that level $n_{j+1}$ is uniformly ordered with respect to level $n_j$. 
 By an extension of the above principle, no depth $n_j-1$ pair can have an $n_{j+1}$ cut. 
 Because if $x,x'$ are depth $n_j-1$, then there is $m \in \Z$ such that they are at different vertices at level $n_j$. 
 Since the coding of each vertex at level $n_{j+1}$ in terms of vertices at level $n_j$ is, up to a shift, the same periodic sequence, $T^mx$ and $T^mx'$ can never be at the same vertex at level $n_j$, for any $m \in \Z$. 
 So for no $m \in \Z$ can $T^mx$ and $T^mx'$ follow only minimal edges from the root to level $n_{j+1}$, as would be required by their having an $n_{j+1}$ cut, since every vertex on level $n_{j+1}$ has the same source for its minimal incoming path from level $n_j$.\\
\indent
But we don't know that {\em there exist} depth $n_j-1$ pairs in the odometer or system conjugate to an odometer.}
\end{comment}

  We aim to show that $CW \subset  NCU$ and $NCW \subset CU$, so that the family of simple, perfectly ordered nonexpansive $BV$ systems is the disjoint union of $CW$ and $CU$;
   \be
{ W_0 \subset W \subset CW = NCU \subset NCU_0.}
  \en
  For this purpose we need to know how pairs of some depth and cuts in a system relate to those in a telescoping of that system. 
  {If $\tilde X$ is a telescoping of $X$, we will call $X$ a {\em lift} of $\tilde X$.
  The following Proposition says, informally, that pairs of some depth in one of $X,\tilde X$ 
  telescope (lift) to pairs of a related depth in the other,
  as do cuts for those pairs.
  {One consequence is that nonexistence of cutoffs is preserved under telescoping and lifts.}
   
\begin{proposition}\label{prop:newlemma}
 	Let $(X,T)$ be a system and $(\tilde X, \tilde T)$ be another system obtained by telescoping $X$. The following statements hold for every $k \geq 1:$\\
 	(1) There exists an $\tilde i(k) \leq k$ such that the image of any depth $k$ pair in $X$ under the telescoping is depth $\tilde i(k)$ in $\tilde X$.
	Furthermore, $\tilde i(k) \rightarrow \infty$ {as {$k \rightarrow \infty$.}}\\
	 	(2) For all sufficiently large $j$ there exists $\tilde J(j) \leq j$ such that if a depth $k$ {pair in $X$ has a $j$ cut then the depth $\tilde i(k)$ image} of that pair in $\tilde X$ has a $\tilde J(j)$ cut.
	 Furthermore, $\tilde J(j) \rightarrow \infty$ as $j \rightarrow \infty$.\\
 	(3) For every depth $k$ pair {$\tilde x^{(k)}, \tilde y^{(k)}$ in $\tilde X$ there exists an $i(\tilde x^{(k)}, \tilde y^{(k)}) \geq k$ such that $\tilde x^{(k)}, \tilde y^{(k)}$ is the image under the telescoping of a depth $i(\tilde x^{(k)}, \tilde y^{(k)})$ pair in $X$.}\\
 	(4)  For any $j > k$ there exists a $J(j)\geq j$ such that {if $\tilde x^{(k)}, \tilde y^{(k)}$ is a depth $k$ pair in $\tilde X$ with a $j$ cut, then that pair is the image under the telescoping of a depth $i(\tilde x^{(k)}, \tilde y^{(k)})$ pair in $X$ with a $J(j)$ cut.}
	% Furthermore, $J(j) \rightarrow \infty$ as $j \rightarrow \infty$.
  \end{proposition}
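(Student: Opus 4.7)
The plan is to reduce everything to an explicit correspondence, induced by the telescoping, between the coding alphabets of $X$ and $\tilde X$ and between their minimal initial segments. Write the telescoping as a choice of levels $0 = n_0 < n_1 < n_2 < \cdots$ of $X$, so that level $l$ of $\tilde X$ corresponds to level $n_l$ of $X$, and paths in $X$ are in canonical bijection with paths in $\tilde X$. Under this identification, the alphabet $\tilde A_l$ of finite paths from the root to level $l$ in $\tilde X$ is canonically identified with the alphabet $A_{n_l}$ of finite paths from the root to level $n_l$ in $X$, so two paths have the same $l$-coding in $\tilde X$ if and only if they have the same $n_l$-coding in $X$. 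Similarly, by the lexicographic rule defining the order on edges in $\tilde X$, the minimal edge in $\tilde X$ into a vertex at level $l$ is exactly the concatenation of minimal edges in $X$ at each step between levels $n_{l-1}$ and $n_l$; consequently a pair has a common $l$ cut in $\tilde X$ if and only if it has a common $n_l$ cut in $X$. These two correspondences are all that is needed.

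With these facts in hand, the four parts reduce to index bookkeeping. For (1), set $\tilde i(k) = \max\{l : n_l \leq k\}$; a depth $k$ pair in $X$ has the same $k$-coding and hence the same $n_{\tilde i(k)}$-coding, but does not have the same $(k+1)$-coding and hence, since $n_{\tilde i(k)+1} \geq k+1$, does not have the same $n_{\tilde i(k)+1}$-coding. By the first correspondence the image pair in $\tilde X$ is exactly depth $\tilde i(k)$; the inequality $\tilde i(k) \leq k$ follows from $n_l \geq l$, and $\tilde i(k) \to \infty$ follows from the finiteness of each $n_l$. For (3), a depth $k$ pair in $\tilde X$ has the same $n_k$-coding but not the same $n_{k+1}$-coding in $X$, so in $X$ its depth is some $i(\tilde x^{(k)}, \tilde y^{(k)}) \in [n_k, n_{k+1}-1]$, which is at least $n_k \geq k$. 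For (2), pick $\tilde J(j) = \max\{l : n_l \leq j\}$, defined as soon as $j \geq n_1$; a $j$ cut in $X$ yields in particular an $n_{\tilde J(j)}$ cut in $X$, which by the second correspondence is a $\tilde J(j)$ cut in $\tilde X$, and $\tilde J(j) \to \infty$ as $j \to \infty$. For (4), a $j$ cut in $\tilde X$ is, by the second correspondence, exactly an $n_j$ cut in $X$, so $J(j) = n_j \geq j$ works.

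There is no serious obstacle here; the content of the proposition is simply that telescoping interacts monotonically with both codings and minimal initial segments. The one point requiring a moment of care, and the source of the asymmetry between (1) and (3), is that the lifted depth $i(\tilde x^{(k)}, \tilde y^{(k)})$ in part (3) cannot be chosen uniformly in $k$: two depth $k$ pairs in $\tilde X$ can in principle agree in $X$ up to genuinely different levels within $[n_k, n_{k+1})$. It is worth emphasizing in the write-up that both $\tilde i(k) \to \infty$ and $\tilde J(j) \to \infty$, since this is what will be used downstream (for example in Remark \ref{rem:untimedtel}) to show that non-existence of cutoffs is preserved in both directions under telescoping.
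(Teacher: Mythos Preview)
Your proof is correct and follows essentially the same approach as the paper: both identify $\tilde i(k)=\tilde J(k)=\max\{l:n_l\le k\}$ and $J(j)=n_j$, with the lifted depth in (3) lying in $[n_k,n_{k+1})$. The only difference is presentational---the paper phrases the argument in terms of the Downarowicz--Maass arrays of $j$-symbols, whereas you work directly with the correspondence of codings and minimal initial segments; the underlying reasoning is identical.
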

  
 \begin{proof} 
 	Suppose that $\tilde X$ is a telescoping of $X$ to levels $(n_l,l \geq0)$ {(with the root being at level $n_0=0$).} 
 	
 	 	Proof of (1) and (2): \\
				If $k<n_1$, then after telescoping from the root to level $n_1$, the image of any pair of paths that is depth $k$ in $(X, T)$ is a pair of paths with different 1-codings in $(\tilde X, \tilde T)$, {i.e., $\tilde i(k)=0$}.
		  	 	{So assume $k \geq n_1.$  In other words, there exists $l_k \geq 1$ such that $n_{l_k}\leq k < k+1 \leq n_{l_k+1}$.} 
		  	 	{In particular, $l_k \to \infty$ as $k \to \infty$.}

 	Let $x^{(k)}, y^{(k)}$ be a depth $k$ pair of paths in $X$. 
 	Since these paths agree to level $k$ and differ at level $k+1$, they have the same $n_{l_k}$ coding, but not the same $n_{l_k + 1}$ coding. 
 	This means row $n_{l_k}$ in the array {of $j$-symbols} for $x^{(k)}$ is identical to row $n_{l_k}$ in the array for $y^{(k)}$ and is met in the same position by the verticals 
 	({see the discussion preceding Example \ref{ex:modGJ}}) for both paths, whereas the same is not true for row $n_{l_k + 1}$.
 	 For every $l \geq 1$, the telescoping removes all rows of the arrays between rows $n_l$ and $n_{l+1}$, so that what used to be row $n_l$ becomes row $l$. 
 	 Afterwards, row $l_k$ in the array for the image of $x^{(k)}$ is identical to row $l_k$ in the array for the image of $y^{(k)}$ and is met in the same position by the verticals for both paths, whereas the same is not true for row $l_{k+1}$.  
 	 Hence, the image of the pair $x^{(k)}, y^{(k)}$ under the telescoping is depth $l_k$.  
 	 Letting $\tilde i(k) = l_k$, (1) is proved.

 	{Suppose that $j>k$ and the paths $x^{(k)}$ and $y^{(k)}$ have a $j$ cut. 
 		This cut appears as a pair of vertical segments in the arrays for the two paths that begin in the same position in row 0, end in row $j$, and consist entirely of rectangle boundaries.  
 	Find $l_j$ such that $n_{l_j}\leq j < n_{l_j+1}$.} 
 	 	After removing the rows for the telescoping, these vertical segments in the arrays for $x^{(k)}$ and $y^{(k)}$ extend from level $0$ to level $l_j$ and still consist entirely of rectangle boundaries.  
 	Hence both represent minimal paths in $\tilde X$ from the root to level $l_j$.  
 	It follows that the image of $x^{(k)}, y^{(k)}$ has an $l_j$ cut.  
 	Letting $\tilde J(j) = l_j$, (2) is proved.
 	
 	Proof of (3) and (4): \\ 	
 	Now let $\tilde x^{(k)}, \tilde y^{(k)}$ be a depth $k$ pair of paths in $\tilde X$.   
 	This means row $k$ in the array for $\tilde x^{(k)}$ is identical to row $k$ in the array for $\tilde y^{(k)}$ and is met in the same position by the verticals for both paths, whereas the same is not true for row $k+1$.  
 	In the original diagram, there is a pair of paths whose image under the telescoping is $\tilde x^{(k)}, \tilde y^{(k)}$.  
 	If we {restore} the rows in their respective arrays that were removed by the telescoping, rows $k$ and $k+1$ in the arrays for $\tilde x^{(k)}$ and $\tilde y^{(k)}$ become rows $n_k$ and $n_{k+1}$.  
 	{Hence, for some $i(\tilde x^{(k)}, \tilde y^{(k)}) \in [n_k, n_{k+1})$,} it must be the case that these arrays are now the same 
 	{from row $0$ to} row $i(\tilde x^{(k)}, \tilde y^{(k)})$ with the vertical in the same position, and that the same is not true for row $i(\tilde x^{(k)}, \tilde y^{(k)})+1$.   
 	It follows that $\tilde x^{(k)}, \tilde y^{(k)}$ is the image of a depth $i(\tilde x^{(k)}, \tilde y^{(k)})$ pair of paths in $X$ under the telescoping. This proves (3).
 	
 	For (4), it is important to note that when we reinsert rows into an array for $\tilde X$ that were removed during the telescoping to get an array for some path in $X$, any vertical segment bounding a rectangle in row $l \geq 1$ of the former becomes a rectangle boundary in row $n_l$ of the new array.   
 	Moreover, the vertical line containing this rectangle boundary at level $n_l$ must include rectangle boundaries from row $n_l$ all the way up to row 0.   
 	So any $j$ cut for a depth $k$ pair in $\tilde X$ just becomes longer when we reinsert {into their respective arrays rows} that were removed by the telescoping.  
 	Specifically, any $j$ cut for a depth $k$ pair of paths $\tilde x^{(k)}, \tilde y^{(k)}$ in $\tilde X$ {corresponds to} an $n_j$ cut for {the} preimage of $\tilde x^{(k)}, \tilde y^{(k)}$ before the telescoping. 
 	Letting $J(j) = n_j$, (4) is proved.
 	 \end{proof}
	 
	 {Some of the systems that we shall encounter while proving our main results will have the 
	 	property encapsulated in the following definition.
	 	\begin{definition}\label{def:WW}
	 	   We say that a system is \emph{weakly well timed} 
	 	 if for infinitely many $k$ for every $j>k$ there is a depth $k$ pair with a $j$ cut.
	 	 $WW$ denotes the class of systems with this property.
	 	 \end{definition}
 	 }
By definition, $W \subset \neg DM2 \subset WW$.
We will want to know what happens to the well timed and $WW$ properties under microscoping and telescoping. 

{It is not necessarily the case that if $(\tilde X, \tilde T)$ is {a telescoping of $(X,T)$ and is well timed,} then $(X, T)$ is well timed.
For example, suppose we telescope $X$ to even levels.
It could happen that  $(X, T)$ has no pairs with odd depth, and yet for every {$k \geq 1$ and every} $j >k$ there exists a depth $k$ pair in $(\tilde X, \tilde T)$ with a $j$ cut whose lift to $(X, T)$ is depth $2k$.  
In this case, $(\tilde X, \tilde T)$  is well timed while $(X, T)$ is not.}

\begin{lemma}\label{lem:lifts}
Let $(X, T)$ be a system and $(\tilde X, \tilde T)$ be another system obtained by telescoping $X$ to levels {$(n_l, l \geq 0)$.}  \\
(1) If $(X, T) \in W$, then $(\tilde X, \tilde T) \in W$.  Likewise, if $(X, T) \in W_0$, then $(\tilde X, \tilde T) \in W_0$. \\
(2) If $(X, T) \in WW$, then $(\tilde X, \tilde T) \in WW$.\\
(3)  If $(\tilde X, \tilde T) \in WW$, then $(X, T) \in WW$.
   \end{lemma}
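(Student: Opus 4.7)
\emph{Plan.} Each part reduces to applying Proposition~\ref{prop:newlemma}, together with the elementary observation that if a pair of paths has a $j$-cut and $j' \leq j$, then the same pair also has a $j'$-cut (an initial segment that is minimal into level $j$ is minimal into every earlier level). Throughout I use the notation of that proposition; in particular $\tilde i(n_k) = k$ and $\tilde J(n_j) = j$.

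\emph{Proof of (1).} Suppose $X \in W$ and fix $k \geq 1$ and $j > k$. Apply the well timed property of $X$ at depth $n_k$ and cut level $n_j > n_k$ to obtain a depth $n_k$ pair in $X$ with an $n_j$-cut. By Proposition~\ref{prop:newlemma}(1)-(2), its image in $\tilde X$ is a depth $k$ pair with a $j$-cut, as required. The $W_0$ case is identical: choose instead a depth $n_k$ pair in $X$ with long cuts; for every $j > k$ in $\tilde X$ we have $n_j > n_k$, so the pair has an $n_j$-cut in $X$, and its image has a $j$-cut in $\tilde X$, hence long cuts.

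\emph{Proof of (2).} Let $S \subset \N$ index the infinitely many good depths $m$ witnessing $X \in WW$, and for each $m$ let $l_m$ be the unique integer with $n_{l_m} \leq m < n_{l_m + 1}$. Since $l_m \to \infty$ as $m \to \infty$, the set $\{l_m : m \in S\}$ is infinite. Given $m \in S$ and any $j > l_m$, we have $n_j \geq n_{l_m+1} > m$, so the $WW$-property at $m$ provides a depth $m$ pair in $X$ with an $n_j$-cut; its image in $\tilde X$ is a depth $l_m$ pair with a $j$-cut. Hence each $l_m$ is a good level for $\tilde X$, and $\tilde X \in WW$.

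\emph{Proof of (3).} This is the subtlest direction, since distinct depth $k$ pairs in $\tilde X$ can lift to pairs in $X$ of different depths inside the interval $[n_k, n_{k+1})$. Fix a level $k$ witnessing $\tilde X \in WW$. For each $j > k$, choose a depth $k$ pair in $\tilde X$ with a $j$-cut and lift it, via Proposition~\ref{prop:newlemma}(3)-(4), to a pair in $X$ of some depth $i_j \in [n_k, n_{k+1})$ with an $n_j$-cut. Because $[n_k, n_{k+1})$ is finite, pigeonhole produces a single $i^* \in [n_k, n_{k+1})$ realized by infinitely many $j$. For any $j'' > i^*$, pick such a $j$ with $n_j > j''$; the corresponding depth $i^*$ pair has an $n_j$-cut and hence a $j''$-cut. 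Thus $i^*$ is a good level for $X$. Running this argument for each of the infinitely many good $k$'s in $\tilde X$ yields good depths $i^*$ lying in pairwise disjoint intervals $[n_k, n_{k+1})$ of $X$, giving $X \in WW$. The main obstacle is precisely this pigeonhole step, which is needed because the lifting is not depth-preserving on the nose.
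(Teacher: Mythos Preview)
Your proof is correct and follows essentially the same approach as the paper's own proof, applying Proposition~\ref{prop:newlemma} in each part with the pigeonhole argument in part~(3) to stabilize the lifted depth. You are in fact slightly more explicit than the paper in stating the monotonicity of cuts (a $j$-cut implies a $j'$-cut for $j'\le j$), which is needed to pass from ``arbitrarily long cuts'' to ``a cut at every level,'' but otherwise the arguments match.
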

\begin{proof}  
 Proof of (1): \\ 
Suppose $(X, T)$ is well timed.  
Fix $k \geq 1$ and let $j>k$. 
There exists in $X$ a depth $n_k$ pair with an $n_j$ cut.  
{As shown in the proof of Proposition \ref{prop:newlemma}, parts (1) and (2), after}
   the telescoping this yields a depth $k$ pair in $\tilde X$ with a $j$ cut.
 % Specifically, in the proof of  Proposition \ref{prop:newlemma}, {in part (1), ${l_k} = k$, 
 % and in the proof of $(2)$, ${n_j} = j$.}
  
 If $(X, T)$ is very well timed, then for every $k \geq 1$ there exists a depth $n_k$ pair with an $n_j$ cut for all $j >k$.  The above argument shows that this yields a depth $k$ pair with long cuts after the telescoping. 

Proof of (2):\\
 Suppose $(X,T) \in WW$. 
In other words, there exists an increasing sequence $(m_k)$ such that for every $k$ and every $j>m_k$ there is a depth $m_k$ pair in $(X, T)$ with a $j$ cut. 

By Proposition \ref{prop:newlemma} (1), for every $k \geq 1$ there exists $\tilde i(m_k) \leq m_k$ such that the image of any depth $m_k$ pair in $X$ under the telescoping is depth $\tilde i(m_k)$ in $\tilde X$. 
Let $\tilde m_k = \tilde i(m_k)$. 
Since $\tilde i(k) \rightarrow \infty$ as $j \rightarrow \infty$, the sequence $(\tilde m_k)$ is increasing.

Given $k \geq 1$ and $j >m_k$, find in $(X, T)$ a depth $m_k$ pair with a $j$ cut.   
By Proposition \ref{prop:newlemma} (2), there exists a $\tilde J(j) \leq j$  such that the depth $\tilde m_k$ image of this pair in $(\tilde X, \tilde T)$ has a  $\tilde J(j)$  cut. 
Since $\tilde J(j) \rightarrow \infty$ as $j \rightarrow \infty$, it follows that $(\tilde X, \tilde T) \in WW$. 

Proof of (3):\\
Suppose that  $(\tilde X, \tilde T) \in WW$.
There exists an increasing sequence $(\tilde m_k)$ such that for every $k$ and every $j> \tilde m_k$ there is a depth $\tilde m_k$ pair $\tilde x^{({\tilde m_k})},  \tilde y^{({\tilde m_k})}$ in $\tilde X$ with a $j$ cut. 

Fix $k \geq 1$.
As we vary $j$, the pair $\tilde x^{({\tilde m_k})},  \tilde y^{({\tilde m_k})}$ may change, and hence $i(\tilde x^{({\tilde m_k})},  \tilde y^{({\tilde m_k})})$ may vary.  
 However, it was shown in the proof of  Proposition \ref{prop:newlemma} (3) that for every depth $\tilde m_k$ pair $\tilde x^{({\tilde m_k})}, y^{({\tilde m_k})}$, {we have $n_{\tilde m_k} \leq  i(\tilde x^{({\tilde m_k})},  \tilde y^{({\tilde m_k})}) < n_{\tilde m_{k}+1}$}. 
 {Hence, there exists {$m_k \in [n_{\tilde m_k}, n_{\tilde m_{k+1}})$} such that for infinitely many $j>\tilde m_k$, the corresponding pair $\tilde x^{({\tilde m_k})},  \tilde y^{({\tilde m_k})}$  lifts to a depth $m_k$ pair, which {by Proposition \ref{prop:newlemma} (4)}  has a $J(j)>j$ cut.} 
 	\begin{comment}  
So {by Proposition \ref{prop:newlemma} (4),} for every $k \geq 1$ there exists {$m_k \in [n_{\tilde m_k}, n_{\tilde m_{k+1}})$} such that for infinitely many $j>\tilde m_k$, {there is a $J(j) \geq j$ such that} some depth $\tilde m_k$ pair with a $j$ cut in $\tilde X$ lifts to a depth $m_k$ pair in $X$ with a $J(j)$ cut. 
\end{comment}  
{Since $J(j) \rightarrow \infty$ as $j \rightarrow \infty$} (and the $m_k$ are all distinct), it follows that $(X, T) \in WW$.
\end{proof}

 \begin{proposition}\label{prop:CW}
   For a system $(X,T)$ the following statements are equivalent:\\
   (1) $(X, T)$ is weakly well timed.\\
   (2) $(X,T)$ has a telescoping that is well timed.\\
   (3) $(X,T)$ is conjugate to a well timed system.\\
   {Thus $WW=CW$.}
   \end{proposition}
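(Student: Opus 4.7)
The plan is to establish the cycle $(2) \Rightarrow (3) \Rightarrow (1) \Rightarrow (2)$, leveraging Proposition \ref{prop:newlemma} for the equivalence $(1) \Leftrightarrow (2)$ and the Herman-Putnam-Skau zig-zag characterization of conjugacy for $(3) \Rightarrow (1)$. The implication $(2) \Rightarrow (3)$ is immediate, since any telescoping preserves the path space and Vershik map (and the unique minimal point), and is therefore a conjugacy as defined here.

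For $(1) \Rightarrow (2)$, I would enumerate the infinite set of good levels witnessing weak well timedness as $l_1 < l_2 < \dots$, so that for each $l_s$ and each $j > l_s$ there is a depth $l_s$ pair in $(X,T)$ with a $j$ cut. I would then telescope $(X, T)$ to the levels $n_0 = 0$ and $n_s = l_s$ for $s \geq 1$. Fixing any $\tilde k \geq 1$ and $\tilde j > \tilde k$ in the telescoped system $\tilde X$, weak well timedness supplies a depth $l_{\tilde k}$ pair in $X$ with an $l_{\tilde j}$ cut. Because the telescoping levels coincide with the good levels, Proposition \ref{prop:newlemma} (1) forces the image of this pair in $\tilde X$ to have depth exactly $\tilde k$ (since $n_{\tilde k} = l_{\tilde k}$ and $n_{\tilde k+1} = l_{\tilde k+1} > l_{\tilde k}$), and Proposition \ref{prop:newlemma} (2) converts the $l_{\tilde j}$ cut into a $\tilde j$ cut. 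Hence $\tilde X \in W$.

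For $(3) \Rightarrow (1)$, I would use the HPS characterization: if $(X, T)$ is conjugate to $(Y, S) \in W \subset WW$, there is a diagram $Z$ with telescopings $\tilde X$ of $X$ (on odd levels) and $\tilde Y$ of $Y$ (on even levels), both themselves telescopings of $Z$. Then I would chain both halves of Lemma \ref{lem:lifts}: from $Y \in WW$ obtain $\tilde Y \in WW$ by (2); lift $WW$ from $\tilde Y$ up to $Z$ by (3); push $WW$ back down to $\tilde X$ by (2); and finally lift from $\tilde X$ up to $X$ by (3).

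The main obstacle is the simultaneous matching of depth and cut indices in $(1) \Rightarrow (2)$: the choice $n_s = l_s$ is what makes $\tilde i(l_{\tilde k}) = \tilde k$ and $\tilde J(l_{\tilde j}) = \tilde j$ hold together, so that one depth $l_{\tilde k}$ pair in $X$ (furnished by the $WW$ hypothesis for some specific $j$) descends to a pair of the right depth \emph{and} with a cut at the right level. The implication $(3) \Rightarrow (1)$ is cleaner because $WW$ is preserved by both telescoping and lifting, so the HPS zig-zag propagates it freely; the only point to watch is to invoke Lemma \ref{lem:lifts} in the correct direction at each step of the chain $Y \rightsquigarrow \tilde Y \leftsquigarrow Z \rightsquigarrow \tilde X \leftsquigarrow X$.
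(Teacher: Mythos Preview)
Your proposal is correct and follows essentially the same route as the paper: the same cycle $(1)\Rightarrow(2)\Rightarrow(3)\Rightarrow(1)$, the same telescoping to the good levels for $(1)\Rightarrow(2)$ via Proposition~\ref{prop:newlemma}, and the same HPS zig-zag combined with Lemma~\ref{lem:lifts} for $(3)\Rightarrow(1)$. The only cosmetic differences are that the paper uses $\tilde J(j)\to\infty$ rather than your exact matching $\tilde J(l_{\tilde j})=\tilde j$, and it shortcuts by taking $\tilde Y=Y$ (via Lemma~\ref{lem:lifts}(1)) instead of inserting your extra step $Y\to\tilde Y$.
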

\begin{proof} 

We show first that (1) implies (2).
Suppose there are infinitely many $k$ for which for every $j>k$ there exists a depth $k$ pair in $X$ with a $j$ cut.  
In other words, there exists an increasing sequence $(m_k)$ such that for all $k$ and for every $j > m_k$ there is a depth $m_k$ pair in $X$ with a $j$ cut. 
Let $\tilde X$ be the telescoping of $X$ to levels $(m_k)$.   

Given $k \geq 1$ and $j >m_k$, find in $(X, T)$ a depth $m_k$ pair with a $j$ cut.   
By Proposition \ref{prop:newlemma} {(1) and (2)}, the image of this pair is depth $k$ with a $\tilde J(j)$ cut. Since $\tilde J(j) \rightarrow \infty$ as $j \rightarrow \infty$, it follows that $\tilde X$ is well timed.

That (2) implies (3) is clear, since conjugacy of $BV$ systems is the equivalence relation that corresponds to the one for diagrams that is generated by telescoping and isomorphism \cite[Section 4]{HPS1992}.  
So if $(X,T)$ has a telescoping that is well timed then it is conjugate to that well timed system. 

 To prove that (3) implies (1), suppose that $(X,T)$ is conjugate to a well timed system $(Y,S)$. 
As remarked above, 
 then there are a system $Z$ and telescopings $\tilde X$ of $X$ and $\tilde Y$ of $Y$ such that $Z$ telescopes on even levels $0,2,4,\dots$ to $\tilde X$ and on odd levels $0,1,3, \dots$ to $\tilde Y$. 
By Lemma \ref{lem:lifts} (1), any telescoping of a well timed system is also well timed. So we may assume that $\tilde Y = Y$. 

Since $W \subset WW$, $Y$ is also weakly well timed.
Hence by Lemma \ref{lem:lifts} (3), $Z \in WW$. 
Next consider the telescoping of $Z$ to $\tilde X$. By Lemma \ref{lem:lifts} (2), $(\tilde X, \tilde T) \in WW$.

Applying part (3) of the lemma again, we conclude that $(X, T) \in WW$.
 \end{proof}
With appropriate adjustments the foregoing Proposition and its proof adapt to very well timed systems.
 \begin{proposition}\label{prop:CW0}
	For a system $(X,T)$ the following statements are equivalent:\\
	(1) There are infinitely many $k$ for which there exists a depth $k$ pair with long cuts.\\
	(2) $(X,T)$ has a telescoping that is very well timed.\\
	(3) $(X,T)$ is conjugate to a very well timed system.
\end{proposition}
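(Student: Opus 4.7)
The plan is to follow the proof of Proposition \ref{prop:CW} verbatim, with the class of weakly well timed systems replaced throughout by the class of systems satisfying condition (1). Exactly as there, (2) $\Rightarrow$ (3) is immediate from the Herman--Putnam--Skau characterization of conjugacy \cite{HPS1992}, and the real content lies in the equivalence (1) $\Leftrightarrow$ (2). This reduces, as for $WW$ in the proof of Proposition \ref{prop:CW}, to verifying that condition (1) is preserved under both telescoping and lifting, in analogy with Lemma \ref{lem:lifts}.

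For (1) $\Rightarrow$ (2), let $(m_k)$ enumerate an infinite sequence of depths at which $X$ admits a pair with long cuts, and telescope $X$ to levels $(m_k)$ to obtain $\tilde X$. A depth $m_k$ pair with long cuts in $X$ becomes, by Proposition \ref{prop:newlemma} (1), a depth $k$ pair in $\tilde X$; its $m_j$ cuts for all $j>k$ map under Proposition \ref{prop:newlemma} (2) (with $\tilde J(m_j)=j$ for this particular telescoping) to $j$ cuts in $\tilde X$ for all $j>k$. Thus every $k \geq 1$ admits a depth $k$ pair with long cuts in $\tilde X$, so $\tilde X \in W_0$.

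For (3) $\Rightarrow$ (1), given $(X,T)$ conjugate to a very well timed $(Y,S)$, choose via \cite{HPS1992} a diagram $Z$ that telescopes on even levels to a telescoping $\tilde X$ of $X$ and on odd levels to a telescoping $\tilde Y$ of $Y$; by Lemma \ref{lem:lifts} (1) we may take $\tilde Y = Y$, so $\tilde Y \in W_0$ and in particular satisfies condition (1). It then suffices to chain the downward and upward preservation of condition (1) along $\tilde Y \leadsto Z \leadsto \tilde X \leadsto X$. Downward preservation (from a system to any of its telescopings) copies the argument for (1) $\Rightarrow$ (2), applied at each of the infinitely many good depths. Upward preservation (from a telescoping to its parent) uses Proposition \ref{prop:newlemma} (3) and (4): a depth $k$ pair in the telescoping with long cuts lifts to a depth $i \in [n_k, n_{k+1})$ pair in the parent whose $n_j$ cuts, for all $j > k$, give by the automatic inheritance of an $\ell$ cut at every level $\ell \leq n$ from an $n$ cut at the same time $m$, cuts at every level $\ell > i$; that is, long cuts in the parent.

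The main subtlety is precisely this upward transfer, since a priori a telescoped pair with long cuts could lift to a pair whose cuts occur only at the sparse telescoping levels of the parent. The resolution is that a cut at a high level is automatically a cut at every lower level at the same time $m$, so the cuts at telescoping levels of the lift fill in the gaps to yield genuine long cuts, and the long cuts property is robust in both directions.
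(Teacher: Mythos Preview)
Your proof is correct and follows precisely the route the paper indicates: the paper does not spell out a proof of Proposition~\ref{prop:CW0} but simply states that ``with appropriate adjustments the foregoing Proposition and its proof adapt to very well timed systems.'' You have carried out those adjustments in detail, and in particular you have identified the one genuinely new ingredient needed beyond the $WW$ case---namely, that an $n$ cut at time $m$ is automatically an $\ell$ cut at the same time $m$ for every $\ell\le n$, so that cuts at the sparse telescoping levels of a lift fill in to give long cuts---which is exactly the ``appropriate adjustment'' the paper leaves implicit.
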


\begin{remark}\label{rem:telw}
{Part (3) of Proposition \ref{prop:CW0} shows that parts (1) and (2) persist under telescoping.}  
\end{remark}

 \begin{example}\label{ex:DM2WW}
 {As promised in Remark \ref{rem:classes}, we show that $WW \cap DM2 \neq \emptyset$.
 	We modify {the diagram for the GJ example $X$} by changing every other {(even-numbered)} Morse component so that all its vertices are uniformly ordered: 
 	for all $j \geq 4$ and all even $i$ such that $1 \leq i < j-1$ we change the ordering at $v(j,2i +1)$
 	so that it is left to right. 
 	Let $Y$ denote the new diagram {and system}, {with the same vertices $v(j,i)$ and ``Morse components" $MC(k)$ as in $X$.}
 	Then for every odd $k$, there is still a depth $k$ pair with long cuts (these are just the same as they were in the GJ example, see Proposition \ref{prop:GJSNE} {and Corollary \ref{cor:GJSNE}}), so {the new system} is in $WW$ .} \\ 
 \indent
 {(In more detail, as in Corollary \ref{cor:GJSNE}, let $x$ and $x'$ be paths that agree to level $k+1$ and pass through $v(k+1, 2k)$ and  $v(k+1, 2k+1)$, respectively, and 
 	such that the ordinal edge label for every edge in $x$ and in $x'$ after level $k+1$ is $2k$.
 	 In other words, $x$ and $x'$ enter the Morse component $MC(k)$ at its top and then follow its two sides all the way down.
 	  By Proposition 4.2, $x$ and $x'$ are depth $k$. 
 	  It is easily verified that for every $j>k$, there exists a time $m<0$ such that $T^mx$ and $T^mx'$ agree to level $j-1$ and are minimal from the root into $v(j, 2k)$ and $v(j, 2k+1)$, respectively.
 	  Therefore $x$ and $x'$ are depth $k$ and have long cuts.)}\\
\indent 
{We show now that for all even $k$ there are no longer any depth $k$ pairs, so the new system is (vacuously) in $DM2$ (and hence in $H2$).}
{The idea is that, as in $X$, the only candidates for depth $k$ pairs are paths down $MC(k)$, but the changed edge orders cause these pairs to have different $2$-codings, so they have become depth $1$.}\\
\indent
{All changes to $X$ were at levels $4$ and higher.  
{Moreover, for every $j \geq 2$}, the coding of $v(j, 3)$ by vertices at level $j-1$ remains the same in $Y$ as it was in $X$; and at vertices in $Y$ where this coding is different than it was in $X$, it is now the same as the coding at $v(j, 1)$.  
Hence, {if as in Observation \ref{obs:recog} we expand each symbol (vertex) $v(j,i)$ to $C_{j-1}(v(j, i))$, the effect on the {$\phi_j$-images} 
(strings on $D_j,E_j,D_{j-1},E_{j-1}$)
produces the same morphisms as before.}
The argument made in Example \ref{ex:modGJ} that paths in $X$ with the same $2$-coding are $2$-equivalent can then be applied to show the same holds true in $Y$. 
Hence, the proof of Proposition \ref{prop:GJSNE} {(4)} still works to show that for any $k \geq 2$ any depth $k$ pair in the new system represented by $Y$ is $k$-equivalent but not $(k+1)$-equivalent.}\\ 
\indent
{Now let $x$, $x'$ be a pair of paths in $Y$ that for some $k \geq 2$ is depth $k$ in the corresponding system.  
Since Observation \ref{obs:GJ} (2) still holds for $Y$, we can argue as in the proof of Proposition \ref{prop:GJSNE} {(3}) that $x$ and $x'$ in $Y$ have the same sequence of edge labels,  {enter $MC(k)$ at the level at which they first differ and are contained in $MC
(k)$ at all subsequent levels.}
{At any level of $Y$ only pairs of edges inside {one of the odd
Morse components in $Y$} {can} have the same label and distinct sources.}
Hence, {$k$ must be odd, and so} there are no pairs of paths with even depth in the new system.}\\
\indent
Telescoping the new diagram to a strictly increasing sequence of levels $j_n$ will produce a system that is not in $H2$ (or $DM2$).  
This is because given any $n$ there are an {odd} integer $k \in [j_n, j_{n+1})$ and a depth $k$ pair with long cuts. 
After the telescoping this pair will be depth $n$ (by {the proof of} Proposition \ref{prop:newlemma} (1)) and will still have long cuts (by Proposition \ref{prop:newlemma} (2)), so the telescoped system is in $W_0$. 
By definition, $W_0 \cap H2 = \emptyset$, so the class $H2$ is not closed under telescoping.\\ 
\indent
{Remark \ref{rem:classes} mentioned that $H2$ not being closed under telescoping impacts the proofs in \cites{dm2008,Hoynes2017}.
If in some system for every odd $k$ there were a depth $k$ pair with a cutoff (so the system is nonvacuously in $H2$, unlike our example), and for every even $k$ there were a depth $k$ pair with long cuts (so the system is also in $WW$), 
the same argument as above would show that telescoping to any strictly increasing sequence of levels produces a system that is not in $H2$.
If the levels were chosen to produce a system in $U_2$, the result would be in $U_2 \setminus H2$.
The proofs in \cites{dm2008,Hoynes2017} could be clarified by ruling out this case or dealing with it, maybe along the lines we suggest in Remark \ref{rem:classes}.}
\end{example}

 The following {Theorem} subsumes Proposition \ref{prop:noSNEconjOdom}.
 \begin{theorem}\label{thm:CWimpliesNCU}
  No system that is conjugate to a well timed system can also be conjugate to an untimed system: $CW \subset NCU$.
  \end{theorem}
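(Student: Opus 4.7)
The plan is to argue by contradiction: suppose some system $X$ lies in $CW \cap CU$, and deduce that a single auxiliary system lies in both $WW$ and $U$, which I will show is impossible.

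First I would invoke Proposition~\ref{prop:CW} to trade $CW$ for the more workable class $WW$: since $X \in CW$ we get $X \in WW$. Since also $X \in CU$, there is a system $Y \in U$ conjugate to $X$. The Herman--Putnam--Skau characterization of conjugacy recalled in Section~\ref{sec:Defs} then supplies a common refinement: a diagram $Z$ that telescopes on even levels to a telescoping $\tilde X$ of $X$ and on odd levels to a telescoping $\tilde Y$ of $Y$.

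Next I would propagate $WW$ all the way from $X$ to $\tilde Y$ via $Z$, using Lemma~\ref{lem:lifts} three times. Part~(2) applied to the telescoping $X \to \tilde X$ gives $\tilde X \in WW$; part~(3) applied to $Z \to \tilde X$ then gives $Z \in WW$; and part~(2) applied to $Z \to \tilde Y$ finally gives $\tilde Y \in WW$. On the other hand, $\tilde Y$ is a telescoping of $Y \in U$, and Remark~\ref{rem:untimedtel} records that $U$ is closed under telescoping, so $\tilde Y \in U$ as well.

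It then remains only to check that $WW \cap U = \emptyset$. If a system were in both, pick a level $k$ among the infinitely many guaranteed by the definition of $WW$; the hypothesis $U$ provides a $k$ cutoff $j(k) > k$; but the $WW$ property at this $k$, applied with $j = j(k)$, produces a depth $k$ pair with a $j(k)$ cut, directly contradicting the definition of cutoff. Applying this to $\tilde Y$ yields the desired contradiction.

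I do not anticipate any serious obstacle: all of the substantive work is already in Proposition~\ref{prop:CW} and Lemma~\ref{lem:lifts}. The only point requiring attention is tracking the direction of the telescoping/lifting step each time Lemma~\ref{lem:lifts} is invoked along the zig-zag $X \to \tilde X \leftarrow Z \to \tilde Y$, so that $WW$ is transferred (and not just preserved in one direction) across the common refinement; once that is done the clash with the $U$ property is immediate from the definitions.
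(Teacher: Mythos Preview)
Your proof is correct and follows essentially the same route as the paper: invoke Proposition~\ref{prop:CW} to get $X\in WW$, then zig-zag $WW$ through the HPS intertwining diagram $X\to\tilde X\leftarrow Z\to\tilde Y$ using Lemma~\ref{lem:lifts}(2),(3),(2), and finish with the (definitional) incompatibility of $WW$ and $U$. The only cosmetic difference is that the paper pushes $WW$ one step further from $\tilde Y$ back to $Y$ (via Lemma~\ref{lem:lifts}(3)) and derives the contradiction at $Y\in U$, whereas you stop at $\tilde Y$ and use closure of $U$ under telescoping from Remark~\ref{rem:untimedtel}; both are fine.
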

  \begin{proof}
   Suppose that $X$ is a system {that is conjugate to a well timed system and is also conjugate to an untimed system $Y$.} 
     As mentioned above, by \cite[Theorem 4.7]{HPS1992},
  then there are a system $Z$ and telescopings $\tilde X$ of $X$ and $\tilde Y$ of $Y$ such that $Z$ telescopes on even levels $0,2,4,\dots$  to $\tilde X$ and on odd levels $0,1,3, \dots$ to $\tilde Y$. 
  
  By  Proposition \ref{prop:CW}, $X \in WW$. Repeated application of Lemma \ref{lem:lifts} gives $\tilde X \in WW$, then $Z \in WW$, then $\tilde Y \in WW$, and finally $Y \in WW$. 
  {This is a contradiction, since by definition no system can be both untimed and weakly well timed.}
    \end{proof}

\begin{theorem}\label{thm:disjunion}
	The family of nonexpansive systems is the disjoint union of those conjugate to well timed systems and those conjugate to untimed systems: 
	\be
	NE=CW \sqcup CU.
	\en
\end{theorem}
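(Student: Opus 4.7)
The theorem amounts to two inclusions: disjointness, $CW \cap CU = \emptyset$, and covering, $NE \subset CW \cup CU$. Disjointness is immediate from Theorem \ref{thm:CWimpliesNCU}, which gives $CW \subset NCU$. All the real work is in the covering, and my plan is to take any $(X,T) \in NE$ with $X \notin CW$ and exhibit an explicit telescoping $\tilde X$ of $X$ lying in $U$; this forces $X \in CU$.

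The starting point is Proposition \ref{prop:CW}, which identifies $CW$ with the class $WW$ of weakly well timed systems. Negating the defining property of $WW$ produces some $K_0 \geq 1$ such that for every $k \geq K_0$ there exists $j(k) > k$ with the property that no depth $k$ pair in $X$ has a $j(k)$ cut. I also use a simple monotonicity observation about cuts: since a path minimal into level $j$ is automatically minimal into every earlier level, a $j$ cut implies a $j'$ cut for every $j' \leq j$. Contrapositively, once a depth $k$ pair lacks a $j(k)$ cut, it also lacks every $j'$ cut with $j' \geq j(k)$.

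The telescoping levels are built recursively: set $n_0 = 0$, $n_1 = K_0$, $n_2 = K_0 + 1$, and for each $l \geq 1$ choose
\[
n_{l+2} \;>\; \max\Bigl(n_{l+1},\ \max\{\, j(k) : n_l \leq k < n_{l+1} \,\}\Bigr),
\]
which is well defined since each inner maximum is over a finite set. Let $\tilde X$ denote the resulting telescoping of $X$. By Proposition \ref{prop:newlemma} parts (3) and (4), any depth $l \geq 1$ pair in $\tilde X$ lifts to a depth $k$ pair in $X$ with $n_l \leq k < n_{l+1}$, and an $(l+2)$ cut in $\tilde X$ corresponds to an $n_{l+2}$ cut in $X$. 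Since $n_{l+2} \geq j(k)$ for every $k$ in that range, the monotonicity observation rules out any such cut. Hence $l+2$ is a cutoff for depth $l$ in $\tilde X$ for every $l \geq 1$, so $\tilde X \in U$ and therefore $X \in CU$.

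The main obstacle is the index bookkeeping in the recursive construction: the cutoffs $j(k)$ may grow arbitrarily fast in $k$, and one needs to bundle the consecutive depths $[n_l, n_{l+1})$ of $X$ into a single depth $l$ of $\tilde X$ while simultaneously reserving a common later level $n_{l+2}$ that exceeds all of their cutoffs at once. The monotonicity of cuts is what makes this bundling safe, since overshooting the level required for any particular $k$ cannot reintroduce a cut there. Beyond this, no structural information about $X$ is needed: the argument rests entirely on Propositions \ref{prop:CW} and \ref{prop:newlemma} together with this inductive choice of telescoping levels.
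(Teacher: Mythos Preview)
Your proof is correct and follows essentially the same approach as the paper: both invoke Theorem~\ref{thm:CWimpliesNCU} for disjointness, then for $X \notin CW$ use Proposition~\ref{prop:CW} to obtain cutoffs $j(k)$ for all $k$ past some threshold, telescope, and apply Proposition~\ref{prop:newlemma}~(3),(4) together with monotonicity of cuts to conclude $\tilde X \in U$. The only difference is that the paper observes that \emph{any} telescoping with $n_1 > k_0$ already lands in $U$ (the cutoff at depth $\tilde k$ in $\tilde X$ being $\max\{j(i): n_{\tilde k} \leq i < n_{\tilde k+1}\}$), whereas you build the levels recursively to force the explicit uniform cutoff $l+2$; your extra bookkeeping is harmless but unnecessary.
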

\begin{proof}
	By Theorem \ref{thm:CWimpliesNCU}, $CW \subset NCU$, so it
	 remains only to show that $NCW \subset CU$. 
	{By Proposition \ref{prop:CW}, if $(X,T) \in NCW$, then there is a $k_0$ such that for all $k \geq k_0$ there is a $k$ cutoff $j(k)>k$: no depth $k$ pair has a $j(k)$ cut. 
	 Let us telescope to levels $n_l, l \geq 0$, with $n_1 > k_0$ to produce the system $(\tilde X, \tilde T)$.
	 	Suppose that $\tilde k \geq 1$ and 
	 	 $\tilde x, \tilde y$ is a depth $\tilde k$ pair in $\tilde X$ with a $\tilde j$ cut.
	 	 Proposition \ref{prop:newlemma} (3) and (4) tell us that 
	 	 then $\tilde x, \tilde y$ is the image under the telescoping of a pair $x,y$ in $X$ of depth $i(\tilde x, \tilde y)
	 	 \in [n_{\tilde k}, n_{\tilde k +1})$ with a $J(\tilde j)=n_{\tilde j}$ cut. 
	 	 Every $i \in [n_{\tilde k}, n_{\tilde k +1})$ has a cutoff $j(i)$ in $X$, in particular 
	 	 $J(\tilde j) < j(i(\tilde x, \tilde y))$. 
	 	 	Since
	 	 	\be
	 	 	\tilde j \leq J(\tilde j) < j(i(\tilde x, \tilde y)) \leq \max \{j(i): i \in [n_{\tilde k}, n_{\tilde k +1})\},
	 	 	\en
	$\tilde j$ is bounded, over all pairs in $\tilde X$ of depth  $\tilde k$. Thus in $\tilde X$ for every $\tilde k$ there is a $\tilde k$ cutoff. This shows that	
	 telescoping past $k_0$ produces a system in $U$, and hence $(X,T) \in CU$.} 
\end{proof}
 
 \section{Describing untimed systems}\label{sec:untimed}

 % Below (\ref{prop:untimed1}--\ref{ex:nondet})
 Now we take a few steps towards determining exactly which $BV$ systems are very untimed.
 If a system $(X,T)$ is very untimed and not conjugate to an odometer, then, by \cite{dm2008}, it {must have} infinite ``topological rank". 
 {At the moment we do not have an example of a very untimed system that has unbounded width.}
 		 
  \begin{comment}
  \begin{example}\label{ex:untimedunbddwidth}
  Figure \ref{fig:untimedExample} shows an example of a very untimed system that is not
  conjugate to any odometer (and so has infinite ``topological rank"). 
  It is constructed by modifying the GJ example \cite[Figure 4]{GJ2000}.
  
  \begin{figure}
  \includegraphics[angle=90,origin=c,page=1,width=.65\textwidth]{DNCExample2020_05_15.pdf}
  \caption{An unbounded width very untimed system}
  \label{fig:untimedExample}
  \end{figure}
  \end{comment} 
  
  \begin{lemma}\label{lem:uo}
  If in a $BV$ diagram level {$n+1$} is uniformly ordered and has more than one vertex, 	
  then in the $BV$ system there is a depth $n$ pair of paths with an $n+1$ cut.
  \end{lemma}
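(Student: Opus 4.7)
The plan is to exhibit an explicit depth-$n$ pair with an $n+1$ cut at time $m = 0$. Since level $n+1$ has more than one vertex, choose two distinct vertices $v, w$ there. Let $x$ be any infinite path from the root extending the minimal path into $v$, and let $x'$ extend the minimal path into $w$. At time $m=0$ both paths are minimal from the root to level $n+1$, giving the required $n+1$ cut. Since $x_n \neq x'_n$ (the terminal edges go to distinct vertices), $\pi_{n+1}(x)_0 \neq \pi_{n+1}(x')_0$, so the $(n+1)$-codings disagree.

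The substantive step is to verify $\pi_n(x) = \pi_n(x')$. By the uniform-ordering hypothesis, every vertex $u$ at level $n+1$ has $n$-basic block $B_n(u) = B^{k_u}$ for a common block $B = b_0 b_1 \cdots b_{|B|-1}$ on the alphabet $A_n$. Since $x$ is minimal into $v$, at time $0$ the path $x$ is at position $0$ of $B_n(v)$, so $\pi_n(x)_0 = b_0$. As $j$ increases, $T^j x$ sweeps through $B_n(v) = B^{k_v}$ symbol by symbol, and upon exiting $v$ it enters the next level-$(n+1)$ vertex at position $0$ of another power of $B$; the $n$-coding therefore traces the concatenation $BBB\cdots$ without break, yielding $\pi_n(x)_j = b_{j \bmod |B|}$ for $j \geq 0$. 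An analogous calculation backward, using that $T^{-1}$ of a path minimal into any level-$(n+1)$ vertex is maximal into some other such vertex, and hence at the last position $b_{|B|-1}$ of $B$, extends the formula to all $j \in \mathbb{Z}$. Repeating the argument for $x'$ gives the same doubly infinite sequence, so $\pi_n(x) = \pi_n(x')$.

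The only wrinkle I see is the wraparound through the globally minimal and maximal paths $x_{\min}, x_{\max}$: if $T^j x = x_{\max}$ for some $j$, then $T^{j+1} x = x_{\min}$. But $x_{\min}$ and $x_{\max}$ are themselves minimal and maximal into level-$(n+1)$ vertices whose basic blocks are powers of $B$, so $x_{\min}$ sits at position $0$ of a copy of $B$ and $x_{\max}$ at position $|B| - 1$, preserving the $B$-pattern. I anticipate no further obstacle: the content is simply that uniform order at level $n+1$ forces the $n$-coding of every path to be a rigid periodic sequence of period $|B|$, and both $x$ and $x'$ are aligned at time $0$ by virtue of being minimal into their respective level-$(n+1)$ vertices.
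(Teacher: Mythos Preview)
Your proposal is correct and follows essentially the same approach as the paper: choose paths minimal into two distinct vertices at level $n+1$, use uniform ordering to write every $n$-basic block there as a power of a common block $B$ (the paper calls it $P$), and conclude that the $n$-coding of any path is the periodic sequence $B^\infty$ with phase determined by the dot, so the two chosen paths---both with dot at position $0$---share the same $n$-coding while differing at level $n+1$. The paper compresses your forward/backward sweep into the single remark that the $n$-factor is a rotation on $|B|$ points, but the content is identical.
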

  \begin{proof}
  Choose two paths $x,x'$ that are minimal into distinct vertices at level $n+1$.
  \begin{comment}  	
  and which are not in the orbit of the minimal/maximal path. 
  \end{comment}
  Since level $n+1$ is uniformly ordered, the $n$-basic block at each vertex at level $n+1$ is periodic with shortest repeating block $P$. 
  (By this we mean that for each vertex $v$ at level $n+1$, there is an integer $n_v$ such that the $n$-basic
  block at $v$ is $P^{n_v}$). 
  Thus the $n$-factor is a rotation on $|P|$ points, and the $n$-coding of every path is the two-sided sequence $P^\infty$, with a choice of the center position.
  Since $x$ and $x'$ are minimal into level $n+1$, their ``dot" is at the beginning of an explicit appearance of $P$ in the $n$-basic blocks at $v_{n+1}(x)$ and $v_{n+1}(x')$ respectively, 
  so $x,x'$ have the same $n$-coding. 
  Hence $x$ and $x'$ are depth $n$ with an $n+1$ cut.
  \end{proof}
  
  \begin{proposition}
  {Every bounded width very untimed system has only finitely many levels with more than one vertex.}
  \end{proposition}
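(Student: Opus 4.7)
I would argue by contradiction. Suppose $(X,T)$ is a bounded-width (say, with at most $R$ vertices per level) very untimed system with infinitely many levels having more than one vertex. Since very untimedness is preserved under telescoping (Remark \ref{rem:untimedtel} (2)), I would first telescope $X$ to the infinite subsequence of its levels having more than one vertex; the resulting system is still bounded-width, still very untimed (hence still nonexpansive), and now \emph{every} one of its levels has between $2$ and $R$ vertices.

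Next I would invoke the Downarowicz--Maass dichotomy \cite{dm2008}: a bounded-width nonexpansive system is conjugate to an odometer. By \cite{FPS2017}*{Theorem 5.3}, such a system admits a telescoping having infinitely many uniformly ordered levels. Telescoping the once-telescoped system again to include such a uniformly ordered level $n+1$, I observe that since the first telescoping already guarantees every level has at least two vertices, and telescoping assigns to each new level the width of its target old level, every level of this further telescoping also has at least two vertices. In particular, the uniformly ordered level $n+1$ in the doubly-telescoped system has more than one vertex. Lemma \ref{lem:uo} then produces in that system a depth-$n$ pair with an $n+1$ cut. But very untimedness persists through both telescopings, yielding the desired contradiction.

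The main subtlety is bookkeeping rather than a deep mathematical hurdle: the uniformly ordered level supplied by \cite{FPS2017}*{Theorem 5.3} must be ensured \emph{not} to be one of the trivially uniformly ordered 1-vertex levels (such as occur in the standard 1-vertex-per-level presentation of the odometer to which the system is conjugate). The initial telescoping to the infinite subsequence of multi-vertex levels does exactly this work: it preserves the ``at least two vertices'' condition at every level of any subsequent telescoping, so that Lemma \ref{lem:uo} applies nontrivially. It is precisely here that the hypothesis of \emph{infinitely many} multi-vertex levels in the original diagram is used.
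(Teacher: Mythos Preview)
Your proof is correct and follows essentially the same approach as the paper: both invoke \cite{dm2008} to obtain conjugacy with an odometer, \cite{FPS2017}*{Theorem 5.3} for uniformly ordered levels, preservation of $U_0$ under telescoping, and Lemma~\ref{lem:uo} for the contradiction. The only difference is bookkeeping---you pre-telescope to the multi-vertex levels so that the uniformly ordered level guaranteed by \cite{FPS2017} automatically has more than one vertex, whereas the paper instead concludes that any uniformly ordered level in the telescoped system must have exactly one vertex, pulls this back to a single-vertex level $m$ in the original diagram, and then propagates forward (each level following a single-vertex level is itself uniformly ordered, hence by Lemma~\ref{lem:uo} and $U_0$ again single-vertex).
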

  \begin{proof}
 {By \cite{dm2008}, bounded width and nonexpansive implies conjugate to an odometer. 
  By \cite{FPS2017} (and also mentioned in \cite{GJ2000}) there is 
  {a telescoping that has} infinitely many uniformly ordered levels. 
  Let $n+1$ ($n > 0$ ) be a uniformly ordered level in the telescoped system, which is still very untimed {(see Remark \ref{rem:untimedtel}, (2))}.
  Because of the Lemma and because no depth $n$ pair can have an $n+1$ cut, this level $n+1$ must have just one vertex. 
  Returning to the original (not telescoped) system, the level $m$ corresponding to level $n+1$ in the telescoped system has just one vertex.
  Any level that follows a level with just one vertex is uniformly ordered, and so, by the Lemma, level $m+1$ in the original system must also have just one vertex. 
  Thus every level in the original system from level $m+1$ on has just one vertex.}
  \end{proof}
  
  \begin{example}\label{ex:standardform}
  So one way to produce very untimed systems is to begin with levels with more than one vertex with edges ordered so that all the edges downward from each vertex have different ordinal labels. 
  Exiting these levels {downward} there can be no cuts. 
  Then continue forever with single-vertex levels connected by multiple edges. 
    Any such system is conjugate to an odometer, upon telescoping from the root to the first single-vertex level.\\
       \end{example}

  \begin{definition}\label{def:deterministic}
  A $BV$ system is {\em deterministic} if for every $n \geq 1$ the edges downward from each vertex at level $n$ have different ordinal labels.
  \end{definition} 
  
  The following Proposition provides more detail about the possible form of deterministic diagrams such as the one in Example \ref{ex:standardform}.

  \begin{proposition}
  Let $(X,T)$ be a deterministic system. 
  For each $n \geq 1$ let $k_n$ denote the number of vertices at level $n$. 
  Then $k_1 \geq k_2 \geq \dots$, and eventually all $k_n=1$. 
  (So the diagram has the form of the previous Example.)
  \end{proposition}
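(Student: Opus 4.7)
The plan is to prove the statement in two stages. First I establish the monotonicity $k_{n+1}\le k_n$ by a counting argument on minimal (label-$1$) edges that uses determinism. Then I invoke uniqueness of the infinite minimal path to force the stabilization value to equal $1$.

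For the monotonicity, fix $n\ge 1$ and count the edges from level $n$ to level $n+1$ carrying ordinal label $1$. Each vertex at level $n+1$ has exactly one incoming label-$1$ edge, so there are precisely $k_{n+1}$ such edges. By determinism, each vertex $v$ at level $n$ has outgoing edges with pairwise distinct labels, so $v$ is the source of at most one label-$1$ edge. Hence the $k_{n+1}$ minimal incoming edges at level $n+1$ have $k_{n+1}$ distinct sources at level $n$, giving $k_{n+1}\le k_n$. Moreover, when equality holds, every vertex at level $n$ must be the source of exactly one label-$1$ edge, so the map $\phi_n:V_n\to V_{n+1}$ sending $v$ to the target of its unique label-$1$ outgoing edge is a bijection.

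Since $(k_n)$ is a non-increasing sequence of positive integers it stabilizes: there are $k\ge 1$ and $n_0\ge 1$ with $k_n=k$ for all $n\ge n_0$, and consequently $\phi_n$ is a bijection for every $n\ge n_0$. To show $k=1$, I exhibit $k$ distinct infinite minimal paths from the root. Given any $w\in V_{n_0}$, iterating backward the map $\psi_{n-1}$ sending each vertex at level $n$ to the source of its (unique) label-$1$ incoming edge produces vertices from level $n_0$ down to the root along a finite chain of label-$1$ edges, i.e.\ a finite minimal path from the root to $w$. Iterating forward the bijections $\phi_{n_0},\phi_{n_0+1},\dots$ from $w$ yields an infinite chain of label-$1$ edges from level $n_0$ onward, and concatenating gives an infinite minimal path through $w$ at level $n_0$. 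Distinct $w\in V_{n_0}$ produce minimal paths with distinct level-$n_0$ vertices, hence distinct infinite minimal paths from the root. Proper ordering forces there to be just one, so $k=1$, and therefore $k_n=1$ for every $n\ge n_0$.

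The only real obstacle is spotting the source-count bound: once determinism is used to say that each vertex at level $n$ is the source of at most one of the $k_{n+1}$ minimal edges at level $n+1$, both the monotonicity and the bijectivity of $\phi_n$ at stable levels drop out at once, and proper ordering supplies the rest. The mild subtlety is that proper ordering must be applied globally to infinite paths from the root rather than as a condition at a single level.
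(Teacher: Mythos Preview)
Your proof is correct and follows essentially the same approach as the paper's: both count the label-$1$ edges between consecutive levels to obtain $k_{n+1}\le k_n$ from determinism, and both use the resulting bijection at stable levels to produce $k$ distinct infinite minimal paths, which proper ordering forces to be a single path. Your version is simply more explicit, spelling out the bijections $\phi_n$ and $\psi_{n-1}$ where the paper's proof is content with ``looking at larger and larger values of $n$''.
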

  \begin{proof}
  If $k_n < k_{n+1}$ for some $n \geq 1$, then the label $1$ must be repeated on edges between levels $n$ and $n+1$, so {$k_n \geq k_{n+1}$} for all $n$.
  
  If eventually all $k_n=k$, we claim that there would be $k$ minimal paths, and so we must have $k=1$.
  This is because from each low {(late)} enough level $n+1$ there are $k$ minimal edges (labeled $1$) {upward} to $k$ different vertices at level $n$ (because each vertex at level $n$ has a single edge downward that is labeled $1$), and hence $k$ minimal paths from level $n+1$ up to the root. 
  Looking at larger and larger values of $n$, we can see $k$ distinct paths that follow only minimal edges. 
  \end{proof}
  
  \begin{example}\label{ex:nondet}
  Not every bounded width very untimed system must be deterministic, as Figure \ref{fig:nondet} shows. 
  {The small numbers are ordinal edge labels, and the strings in  parentheses are $1$-basic blocks.}
  In this example the only possible cuts are {for
  pairs of paths among $x=0auA...$ (thick edges), $x'=0avB\dots$ (dashed edges) and $x''=0bwB\dots$ {(or pairs in the orbits of such pairs)}. 
  While $x$ and $x'$ have a $2$ cut, they are not depth $1$; $x$ and $x''$ have $2$ and $3$ cuts, but they are not depth $1$ or $2$; 
  and $x'$,$x''$ have a $2$ cut, but they are not depth $1$. In fact there are no depth $1$ or depth $2$ pairs.}
 
  \begin{figure}
  \begin{tikzpicture}[scale=1]
  \node at (0,0) (0) {$0$};
  \node at (-2,-1) (a) {$a$};
  \node at (0,-1) (b) {$b$};
  \node at (2,-1) (c) {$c$};
  \node at (-2,-2) (u) {$u\, (ab)$};
  \node at (-2.1,-1.6) {{\tiny $1$}};
  \node at (-1.6,-1.6) {{\tiny $2$}};
  \node at (-.55,-1.6) {{\tiny $1$}};
  \node at (-.15,-1.6) {{\tiny $2$}};
  \node at (0.9,-1.6) {{\tiny $1$}};
  \node at (1.8,-1.6) {{\tiny $2$}};
  \node at (-1.6,-2.6) {{\tiny $1$}};
  \node at (-.2,-2.6) {{\tiny $2$}};
  \node at (0.75,-2.6) {{\tiny $2$}};
  \node at (1.25,-2.6) {{\tiny $1$}};
  \node at (-.5,-3.7) {{\tiny $1$}};
  \node at (0.05,-3.7) {{\tiny $2$}};
  \node at (-2.1,-1.6) {{\tiny $1$}};
  \node at (-2.1,-1.6) {{\tiny $1$}};
  
  \node at (0,-2) (v) {$v\, (ab)$};
  \node at (2,-2) (w) {$w\, (bc)$};
  \node at (-1,-3) (A) {$A\, (abbc)$};
  \node at (1,-3) (B) {$B\,(bcab)$};
  \node at (0,-4) (AB) {};
  \draw  (AB) circle (0.05);
  \node at (.5,-4) (bk) {$AB$};
  \node at (0,-5) (x) {};
  \draw  (x)   (0.05);
  \node at (0,-6) (s) {};
  \draw  (s) circle (0.05);
  \node at (0,-6.2) (y) {};
  \draw  (y) circle (0.05);
  \node at (0,-6.4) (z) {};
  \draw  (z) circle (0.05);
  \node at (0,-6.6) (r) {};
  \draw  (r) circle (0.05);
  \draw[thick] (0) -- (a);
  \draw[dashed] (0) .. controls (-.5,-.5) .. (a);
  \draw (0) -- (b);
  \draw (0) -- (c);
  \draw[thick] (a) -- (u);
  \draw[dashed] (a) -- (v);
  \draw (b) -- (v);
  \draw (b) -- (u);
  \draw (b) -- (w);
  \draw (c) -- (w);
  \draw[thick] (u) -- (A);
  \draw[dashed] (v) -- (B);
  \draw (w) -- (A);
  \draw (w) -- (B); 
  \draw[thick] (A) -- (AB);
  \draw[dashed] (B) -- (AB);
  \draw[thick] (AB) .. controls (-.2,-4.5) .. (x);
  \draw[dashed] (AB) .. controls (.2,-4.5) .. (x); \draw[thick] (x) .. controls (-.2,-5.5) .. (s);
  \draw[dashed] (x) .. controls (.2,-5.5) .. (s);
  
  \end{tikzpicture}
  \caption{A nondeterministic bounded width very untimed system}
  \label{fig:nondet}
  \end{figure}
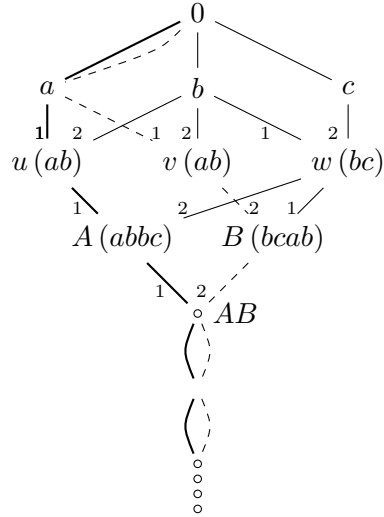
  \end{example}

\section{Conclusion and questions}
The family of simple, {properly} ordered Bratteli-Vershik systems is the disjoint union of the expansive systems, the systems conjugate to well timed systems, and the systems conjugate to untimed systems:
\be
BV = E \sqcup CW \sqcup CU .
\en

The foregoing suggests several questions:

1. Is there an example of an untimed (or even very untimed) system that has unbounded width? 
Is there an untimed system that has infinite topological rank, equivalently is not conjugate to any odometer? 
Is the class $U$ closed under conjugacy?

2. Is every well timed system conjugate to an $SSNE$ (or $SNE$) system?
If so, one could regularize nonexpansiveness by modifying any member of this class $W$ of well timed  $NE$ systems so that it becomes $SSNE$. 

3. There are many questions about the relations among the classes of systems that we have  defined, and others that could be defined. 
Exactly which classes can overlap, exactly what inclusions are there among them, exactly which of these inclusions are strict, etc.?
Is every odometer ({by which we here mean all levels strictly uniformly ordered, cf. Example \ref{ex:odoms})} in $U$? 
Is every system conjugate to an odometer in $U$? 
Is $SNE$ contained in $W$? 
Can $H2$ and $W$ intersect? (We know $DM2 \cap W = \emptyset$.)
Is $U_1 \cap W \neq \emptyset$? Is $U_2 \cap W \neq \emptyset$? 

\bibliographystyle{mscplain}
\bibliography{PeriodicBiblio2021_11_29}

 \end{document}